\documentclass[12pt]{amsart}

\usepackage{graphicx}
\usepackage{mathptmx}      % use Times fonts if available on your TeX system
%
% insert here the call for the packages your document requires
\usepackage{latexsym}
% etc.
\usepackage{amssymb}
\usepackage{cite}
\usepackage{amsfonts}	% v2
\usepackage{amstext}
\usepackage{amsthm,amsmath,amsfonts,amssymb,amscd,latexsym,txfonts,stmaryrd}
\usepackage{color}
\usepackage{epsfig}
\usepackage[all,cmtip]{xy}
\usepackage{enumerate}
%
% please place your own definitions here and don't use \def but
% \newcommand{}{}
\newtheorem{theorem}{Theorem}
\newtheorem*{theorem1*}{Theorem 1}
\newtheorem*{theorem*}{Theorem 1}
\newtheorem*{problem*}{Problem}
\newtheorem{definition}{Definition}
\newtheorem*{remarks*}{Remarks}
\newtheorem*{remark*}{Remark}
\newtheorem{lemma}{Lemma}
\newtheorem{corollary}{Corollary}
\newtheorem*{corollary1*}{Corollary 1}
\newtheorem*{corollary*}{Corollary}

\newcommand{\dsp}{\displaystyle}

\newcommand{\xymap}{\xymatrixcolsep{5pc}\xymatrixrowsep{1pc}\xymatrix}
\newcommand{\xymapp}{\xymatrixcolsep{2.5pc}\xymatrixrowsep{1pc}\xymatrix}

\newcommand{\spa}{\operatorname{span}}

\newcommand{\sra}{\shortrightarrow}

\newcommand{\ol}{\overline}

\newcommand{\R}{{\mathbb R}}

%\renewcommand{\P}{{\mathbb P}}

%\renewcommand{\theenumi}{\roman{enumi}}
%
% Insert the name of "your journal" with
% \journalname{}
%
\begin{document}
\title[Lifting]{Generalized 1-Skeleta and a Lifting Result}
\author{Chris McDaniel}
\address{Dept. of Math. and Comp. Sci.\\
Endicott College\\
Beverly, MA 01915}
\email{cmcdanie@endicott.edu}

%\begin{frontmatter}

% Title, authors and addresses

% use the thanksref command within \title, \author or \address for footnotes;
% use the corauthref command within \author for corresponding author footnotes;
% use the ead command for the email address,
% and the form \ead[url] for the home page:
%\title{The strong Lefschetz property for coinvariant rings of finite reflection groups\thanksref{label1}}
%\thanks[label1]{}
%\author{Chris R. McDaniel\corauthref{cor1}\thanksref{label2}}
%\thanks[label2]{}
%\corauth[cor1]{}
% \address{Address\thanksref{label3}}
% \thanks[label3]{}

%\title{Generalized 1-skeleta and a lifting result}
% use optional labels to link authors explicitly to addresses:
% \author[label1,label2]{}
% \address[label1]{}
% \address[label2]{}

%\author{Chris McDaniel}
%\address{Dept. of Math. and Comp. Sci., Endicott College, MA 01915}
%\ead{mcdaniel.chris@gmail.com}
%\ead[url]{http://www.math.umass.edu/~mcdaniel/}

\begin{abstract}
In \cite{GZ1} Guillemin and Zara described some beautiful constructions enabling them to use Morse theory on a certain class 1-skeleta that includes all 1-skeleta of simple polytopes.  In this paper we extend some of the notions and constructions from \cite{GZ1} to a larger class of 1-skeleta that includes all 1-skeleta of projected simple polytopes.  As an application of these ideas we prove a lifting result for 1-skeleta, which yields a characterization of 1-skeleta of projected simple polytopes.
%Insert your abstract here. Include keywords, PACS and mathematical
%subject classification numbers as needed.
%\keywords{1-skeleton \and GKM manifold \and lifting \and simple polytope}
% \PACS{PACS code1 \and PACS code2 \and more}
%\subclass{05C \and 52B \and 52C}
\end{abstract}

\maketitle

%\begin{keyword}
%1-skeleton \sep GKM \sep lifting \sep projection \sep simple polytope 
% keywords here, in the form: keyword \sep keyword

% PACS codes here, in the form: \PACS code \sep code
%\PACS 
%\end{keyword}
%\end{frontmatter}
%\tableofcontents

%\title{Generalized 1-Skeleta and a Lifting Result%\thanks{Grants or other notes
%about the article that should go on the front page should be
%placed here. General acknowledgments should be placed at the end of the article.}
%}
%\subtitle{Do you have a subtitle?\\ If so, write it here}

%\titlerunning{Short form of title}        % if too long for running head

%\author{Chris McDaniel
%}

%\authorrunning{Short form of author list} % if too long for running head

%\institute{C. McDaniel \at
 %             Department of Mathematics and Computer Science, Endicott College, Beverly, MA 01915  \\
              %Tel.: +123-45-678910\\
              %Fax: +123-45-678910\\
 %             \email{cmcdanie@endicott.edu}           %  \\
%             \emph{Present address:} of F. Author  %  if needed
  %         }

%\date{Received: date / Accepted: date}
% The correct dates will be entered by the editor

%\maketitle

\section{Introduction}
\label{intro}
A 1-skeleton (in $\R^n$) is a triple $(\Gamma,\alpha,\theta)$ where $\Gamma$ is a $d$-valent graph, $\theta$ is a \emph{connection} that matches edges at adjacent vertices, and $\alpha$ is an \emph{axial function} that assigns to the edges issuing from each vertex of $\Gamma$ pairwise linearly independent vectors (in $\R^n$)  satisfying certain coplanarity conditions determined by $\theta$.  The study of 1-skeleta was motivated by work of Chang and Skjelbred \cite{CS}, and subsequent work of Goresky, Kottwitz, and MacPherson \cite{GKM}, where it was observed that the equivariant cohomology of a certain type of topological space equpped with a torus action, now known as a (-n equivariantly formal) GKM space, could be computed from the linear graph associated to its $0$- and $1$-dimensional torus orbits.  An important example of a smooth GKM space, i.e. a GKM manifold, is a smooth projective toric variety, taken either with its full torus action or with a suitable subtorus action; here the 1-skeleton is that of the associated simple polytope or, for subtorus actions, a linear projection of it.

In a series of papers \cite{GZ0,GZ1,GZ2}, Guillemin and Zara formulated the notion of a 1-skeleton as described above, and showed that many topological techniques and theorems regarding GKM manifolds have combinatorial analogues on 1-skeleta.  In particular, Guillemin and Zara described combinatorial analogues for symplectic blow-up and symplectic reduction, and developed a combinatorial analogue of Morse theory for 1-skeleta.  In \cite{GZ1} Guillemin and Zara described a class of $3$-independent 1-skeleta they called \emph{noncyclic}, on whose members their reduction operation and their Morse theory apply.  Specifically a $d$-valent $k$-independent ($k\geq 3$) noncyclic 1-skeleton has a Morse function, and has planar slices that are convex polygons.  The reduced spaces, or \emph{cross sections}, are the level sets of the Morse function, and are themselves $(d-1)$-valent $(k-1)$-independent 1-skeleta.  An important result of \cite{GZ1} proves that all cross sections of a given noncyclic 1-skeleton are related by a finite sequence of blow-ups and blow-downs.     

While the noncyclic class contains all 1-skeleta of simple polytopes, the $3$-independence condition excludes many 1-skeleta of projected simple polytopes.  In Section 3 we extend the noncyclic class to a larger class of 1-skeleta called \emph{reducible} by dropping the $3$-independence condition, and replacing ``planar slices'' with subskeleta that we call \emph{$2$-faces}.  The reducible class then contains all 1-skeleta of simple polytopes, as well as all 1-skeleta of projected simple polytopes. 
The reduction operation on the reducible class is slightly complicated; without $3$-independence, the cross sections of a reducible 1-skeleton degenerate, but they do so in a manageable way.

A \emph{compatibility system}, $\lambda$, for the triple $(\Gamma,\alpha,\theta)$ is a system of positive constants related to the compatibility of $\alpha$ and $\theta$.  A compatibility system on a 1-skeleton is uniquely determined by the pairwise independence condition on $\alpha$, hence $\lambda$ is sometimes omitted from the notation.  In Section 3 we define a \emph{generalized 1-skeleton} as a quadruple $(\Gamma,\alpha,\theta,\lambda)$ where $\Gamma$, $\theta$, and $\lambda$ are as above, and $\alpha$ is almost an axial function as defined above, except that we drop the pairwise linearly independent condition; in particular a 1-skeleton is a generalized 1-skeleton.  We show that the reduction operation of \cite{GZ1} applies almost verbatim to reducible 1-skeleta, except that the cross sections of a \emph{reducible} 1-skeleton are \emph{generalized} 1-skeleta. 

A reducible $d$-valent $d$-independent 1-skeleton (e.g. the 1-skeleton of a simple $d$-polytope) has restrictions on its cross sections that are inherited through projection.  In particular, the cross section of a general reducible 1-skeleton comes with two natural connections (\emph{up} and \emph{down}) and two natural compatibility systems (\emph{up} and \emph{down}).  It is straight forward to show that the up datum agrees with the down datum on the cross sections of a $d$-valent $d$-independent 1-skeleton.  In general, the up and down connections on each cross section of a reducible 1-skeleton agree if and only if its $2$-faces have \emph{trivial normal holonomy}, and, similarly, the up and down compatibility systems on each cross section agree if and only if the $2$-faces are \emph{level}.  What is perhaps surprising is that these two properties of $2$-faces are sufficient to detect projections of $d$-independent reducible 1-skeleta.  We say a $d$-valent 1-skeleton has a \emph{total lift} if it is the projection of a $d$-valent $d$-independent 1-skeleton.  In Section 4 we prove the main result in this paper which characterizes reducible 1-skeleta admitting a total lift.
\begin{theorem}
\label{thm:ch2mainresult}
Let $(\Gamma,\alpha,\theta,\lambda)$ be a $d$-valent reducible 1-skeleton in $\R^n$.  Then $(\Gamma,\alpha,\theta,\lambda)$ has a total lift if and only if\\
${\dsp\left(\dagger\right) \ \ \  \text{Every $2$-face of $(\Gamma,\alpha,\theta,\lambda)$ is level and has trivial normal holonomy}.}$
\end{theorem}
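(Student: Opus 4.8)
The plan is to prove both directions of the equivalence, with the ``if'' direction being the substantive one. First consider the ``only if'' direction: suppose $(\Gamma,\alpha,\theta,\lambda)$ has a total lift, i.e. it is the projection $\pi\circ\tilde\alpha$ of a $d$-valent $d$-independent reducible 1-skeleton $(\Gamma,\tilde\alpha,\tilde\theta,\tilde\lambda)$. The key observation here is that the lift has the \emph{same} underlying graph and (necessarily) the same connection, and that the $2$-faces of the lift are honest $2$-faces of a $d$-independent 1-skeleton, hence by the remarks preceding the theorem they are automatically level with trivial normal holonomy (the ``up'' and ``down'' data coincide in the $d$-independent case). One then checks that both properties --- being level and having trivial normal holonomy --- are preserved under the projection $\pi$, since they are properties of the connection $\theta$ (unchanged) and of ratios/linear combinations of axial values along a $2$-face that are detected by the compatibility system $\lambda$, and $\lambda$ is determined combinatorially in a way compatible with linear projection. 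This gives $(\dagger)$.

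For the ``if'' direction, assume $(\dagger)$ and construct a total lift. The natural strategy is an inductive/iterative one using the cross-section (reduction) machinery developed in Section 3: a reducible $d$-valent 1-skeleton has cross sections that are $(d-1)$-valent generalized 1-skeleta, and condition $(\dagger)$ --- levelness and trivial normal holonomy of all $2$-faces --- is exactly what guarantees that the ``up'' and ``down'' connections and compatibility systems on each cross section agree, so that the cross sections are well-defined \emph{genuine} (or at least consistently-structured) generalized 1-skeleta to which the hypotheses again apply. I would set up a lifting of the axial function one ``new coordinate'' at a time: choose a generic Morse function (linear functional) $\xi$ on $(\Gamma,\alpha,\theta,\lambda)$, and build a new axial function $\alpha'$ into $\R^{n+1}$ by prescribing the extra coordinate of each edge vector to be the $\xi$-difference of its endpoints (so the new skeleton genuinely has one more independent direction), then verify the coplanarity/compatibility axioms for $(\Gamma,\alpha',\theta,\lambda)$ using the fact that $\xi$ is a Morse function and that $2$-faces behave well. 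Iterating until the skeleton becomes $d$-independent produces the total lift; one must check that finitely many steps suffice (the independence rank strictly increases, bounded by $d$) and that $(\dagger)$ is inherited at each stage.

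The main obstacle will be showing that condition $(\dagger)$ is genuinely inherited by the intermediate (partially lifted) 1-skeleta, and relatedly, that the one-step lift $\alpha'$ really does satisfy the coplanarity relations dictated by $\theta$ on every $2$-face --- this is where levelness and trivial normal holonomy must be used in an essential way, since without them the prescribed extra coordinates would fail the linear relations forced by the connection around a $2$-face. A clean way to organize this is to first prove a \emph{local} lifting lemma: given $(\dagger)$, each $2$-face of $(\Gamma,\alpha,\theta,\lambda)$ --- which is essentially the $1$-skeleton of a polygon, possibly with degeneracies --- lifts compatibly, and these local lifts can be glued using the connection precisely because normal holonomy around the $2$-faces is trivial (so there is no obstruction cocycle) and because levelness makes the compatibility constants match up. I would then globalize by an argument over the $2$-skeleton of $\Gamma$, reducing the existence of a consistent lift to the vanishing of a holonomy/cohomology-type obstruction that $(\dagger)$ kills. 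Finally I would tie the iterated construction back to the projection statement: the composite of the iterated one-step lifts is a $d$-independent reducible 1-skeleton whose projection to $\R^n$ (forgetting the added coordinates) recovers $\alpha$, completing the proof.
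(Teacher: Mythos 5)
Your ``only if'' direction matches the paper in substance: a total lift shares the same $\Gamma$, $\theta$, and $\lambda$ by definition, the $2$-faces of the $d$-independent lift are genuine $2$-slices, and $d$-independence forces trivial normal holonomy and (via a covector annihilating the $2$-plane) levelness; since both properties are statements about $\theta$ and $\lambda$ alone, nothing actually needs to be ``preserved under projection'' --- the conditions downstairs are literally the same conditions. The genuine gap is in the ``if'' direction. Your one-coordinate-at-a-time lift does not get off the ground: if the new coordinate of $\alpha'(e)$ is $\langle\xi,\alpha(e)\rangle$, then every $\alpha'(e)$ lies in the graph of the linear map $\xi$, so the span at each vertex does not grow and no independence is gained; if instead it is $\phi(t(e))-\phi(i(e))$ for a Morse function $\phi$, then axiom gA2 generically fails, because the first $n$ coordinates already determine the constant $c$ in $\alpha'(e')-\lambda_e(e')\alpha'(\theta_e(e'))=c\,\alpha'(e)$ (as $\alpha(e)\neq 0$), and the appended coordinate must satisfy that same linear relation with that same $c$ --- a rigid constraint an arbitrary Morse function will not meet. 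More generally, any lift $A(e)=(\alpha(e),\beta(e))$ requires $\beta$ to satisfy, at every vertex and for every pair $(e,e')$, the relation with the $c$ dictated by $\alpha$, \emph{and} to make the $d$ vectors at each vertex independent; your ``local lift on each $2$-face plus vanishing of a holonomy obstruction'' names the right heuristic but is never carried out, and it is exactly the hard part. There is also no reason the intermediate objects are $k$-independent for intermediate $k$, so the claimed strictly increasing ``independence rank'' is not available as an induction parameter.

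The paper's route is quite different and worth internalizing: it first uses the cutting construction (direct product with an interval, Lemma~\ref{lem:cutlift}) to realize $(\Gamma,\alpha,\theta,\lambda)$ as a down cross section of a $(d+1)$-valent reducible $1$-skeleton that still satisfies $(\dagger)$; it then proves (Lemma~\ref{lem:downcrosslift}) that every down cross section of a reducible skeleton satisfying $(\dagger)$ has a total lift, by induction over the critical values of a Morse function. The base case is the cross section just above the unique source --- a complete graph on $d$ vertices for which a $d$-independent lift is written down explicitly from the relations \eqref{eq:completeaxial1} and \eqref{eq:completeaxial2} --- and the inductive step passes over each critical vertex using the blow-up equivalences of Lemma~\ref{lem:equivblowup} together with the fact that blow-up preserves total liftability (Lemma~\ref{lem:blowupcommuteslift}). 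Condition $(\dagger)$ enters precisely through Lemma~\ref{lem:equivblowup}, which identifies the up and down data on cross sections. To salvage your approach you would have to convert the obstruction paragraph into an actual cocycle argument over the $2$-faces while simultaneously guaranteeing $d$-independence at every vertex; the paper's induction handles both at once.
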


In Section 2 we make the important observation that a reducible $d$-valent $d$-independent 1-skeleton in $\R^d$ corresponds to a complete simplicial fan in $\left(\R^d\right)^*$.  Moreover, under this correspondence an \emph{embedding} for the 1-skeleton corresponds to a strictly convex conewise linear function on the fan.  Thus we have the following corollary to Theorem \ref{thm:ch2mainresult}, characterizing 1-skeleta of projected simple polytopes.

\begin{corollary}
\label{cor:simplepolytope}
A $d$-valent 1-skeleton $(\Gamma,\alpha,\theta,\lambda)$ is the 1-skeleton of a projected simple polytope if and only if $(\Gamma,\alpha,\theta,\lambda)$ is reducible, all its $2$-faces are level with trivial normal holonomy, and it admits an embedding $F\colon V_\Gamma\rightarrow \R^n$.
\end{corollary}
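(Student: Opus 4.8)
\emph{Overall plan and the forward implication.} The plan is to deduce the corollary from Theorem~\ref{thm:ch2mainresult}, the fan dictionary of Section~2, and the classical fact that a complete simplicial fan is the normal fan of a (necessarily simple) polytope exactly when it supports a strictly convex conewise linear function. For the forward implication, suppose $(\Gamma,\alpha,\theta,\lambda)=p_*(\Gamma,\alpha_P,\theta,\lambda)$, where $(\Gamma,\alpha_P,\theta,\lambda)$ is the $1$-skeleton of a simple $d$-polytope $P\subset\R^d$ and $p\colon\R^d\to\R^n$ is linear with $p\circ\alpha_P=\alpha$. Then $(\Gamma,\alpha,\theta,\lambda)$ is reducible, since by construction the reducible class contains all $1$-skeleta of projected simple polytopes. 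Its lift $(\Gamma,\alpha_P,\theta,\lambda)$ is $d$-valent and $d$-independent, so $(\Gamma,\alpha,\theta,\lambda)$ has a total lift, and the easy direction of Theorem~\ref{thm:ch2mainresult} gives $(\dagger)$. Finally the vertex inclusion $V_\Gamma\hookrightarrow\R^d$ of $P$ is an embedding of $(\Gamma,\alpha_P)$, and composing it with $p$ yields a map $F\colon V_\Gamma\to\R^n$ with $F(w)-F(v)=c_e\,p(\alpha_P(e))=c_e\,\alpha(e)$ for the same positive constants $c_e$; since $\alpha(e)\neq0$ for every edge, $F$ is an embedding of $(\Gamma,\alpha)$.

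\emph{The reverse implication: reduction to an embeddable lift.} Assume now that $(\Gamma,\alpha,\theta,\lambda)$ is reducible, all its $2$-faces are level with trivial normal holonomy, and $F\colon V_\Gamma\to\R^n$ is an embedding. Theorem~\ref{thm:ch2mainresult} produces a total lift: a $d$-valent $d$-independent $1$-skeleton $(\Gamma,\hat\alpha,\theta,\lambda)$ in $\R^d$ together with a linear surjection $p\colon\R^d\to\R^n$, $p\circ\hat\alpha=\alpha$. By Section~2 this lift corresponds to a complete simplicial fan $\Sigma$ in $(\R^d)^*$, and if $\Sigma$ is the normal fan of a simple $d$-polytope $P$, then $(\Gamma,\hat\alpha,\theta,\lambda)$ is the $1$-skeleton of $P$, so $(\Gamma,\alpha,\theta,\lambda)=p_*(\text{$1$-skeleton of }P)$ is the $1$-skeleton of the projected simple polytope $p(P)$. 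As embeddings of a $d$-valent $d$-independent $1$-skeleton correspond under Section~2 to strictly convex conewise linear functions on its fan, it therefore suffices to exhibit \emph{one} total lift of $(\Gamma,\alpha,\theta,\lambda)$ that admits an embedding.

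\emph{The crux, and conclusion.} This is the one place where the embedding of $(\Gamma,\alpha,\theta,\lambda)$ itself is used, and where I expect the real work to be. The plan is to rerun the total-lift construction from the proof of Theorem~\ref{thm:ch2mainresult} with $F$ in hand, and to show it can be carried out so that $F$ lifts along with it: that the ``extra coordinates'' of $\hat\alpha$ can be chosen so that $F$ extends to a vertex map $\hat F\colon V_\Gamma\to\R^d$ with $\hat F(w)-\hat F(v)$ a positive multiple of $\hat\alpha(e)$ along every edge $e=(v,w)$. The structural input is that the $2$-faces of $(\Gamma,\alpha,\theta,\lambda)$ correspond to the cycles of maximal cones of $\Sigma$ around its codimension-$2$ cones; $F$ realizes each $2$-face as a convex polygon in $\R^n$, and the level and trivial-normal-holonomy hypotheses are precisely what allow these polygons to be lifted to $\R^d$ compatibly across adjacent $2$-faces --- the same compatibility that yields $(\Gamma,\hat\alpha,\theta,\lambda)$ in the first place. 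The main obstacle is showing that the edge-constants $c_e$ coming from $F$ still close up around every cycle of $\Gamma$ once the extra coordinates are inserted, i.e. that $F$ is compatible with, and not merely a linear projection of, the chosen lift; the slack in the lift construction afforded by reducibility together with $(\dagger)$ is what should make this possible. Granting this, $\hat F$ is an embedding of $(\Gamma,\hat\alpha,\theta,\lambda)$, so by Section~2 the fan $\Sigma$ supports a conewise linear function that is strictly convex across the walls inside each $2$-face (because $F$ realizes that $2$-face as a convex polygon), hence strictly convex everywhere, since local strict convexity around every codimension-$2$ cone of a complete simplicial fan implies global strict convexity. Thus $\Sigma$ is the normal fan of a simple $d$-polytope $P$, and projecting $P$ by $p$ recovers $(\Gamma,\alpha,\theta,\lambda)$ as the $1$-skeleton of the projected simple polytope $p(P)$, completing the proof.
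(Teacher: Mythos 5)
Your overall architecture agrees with the paper's: the forward direction is handled exactly as in the paper (a total lift exists by hypothesis, so Theorem~\ref{thm:ch2mainresult} gives $(\dagger)$, and the vertex map of the polytope composed with $p$ is an embedding), and for the converse you correctly reduce the problem to showing that \emph{some} total lift $(\Gamma,A,\theta,\lambda)\subset\R^d$ produced by Theorem~\ref{thm:ch2mainresult} admits an embedding, after which the fan dictionary of Section~2 finishes the argument. The problem is that you do not actually carry out this reduction's crux. Your third paragraph is an explicit plan --- rerun the cutting/cross-section/blow-up construction while dragging $F$ along, and hope that ``the slack in the lift construction'' lets the edge constants $c_e$ close up around every cycle once the extra coordinates are inserted --- followed by the words ``Granting this.'' That is precisely the step that needs a proof, and nothing in your sketch supplies one: tracking an embedding through the equivalences of Lemma~\ref{lem:equivblowup}, the blow-ups of Lemma~\ref{lem:blowupcommuteslift}, and the cutting of Lemma~\ref{lem:cutlift} is not addressed in the paper's lemmata, and it is not clear that the lift constructed there has any freedom left to accommodate a prescribed embedding. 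So as written the reverse implication has a genuine gap.

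The paper closes this gap by a completely different, cohomological device. The total lift $(\Gamma,A,\theta,\lambda)$ is a $d$-valent $d$-independent noncyclic 1-skeleton, so by Theorem~2.4.4 of \cite{GZ1} its equivariant cohomology ring $H(\Gamma,A)$ admits a generating family $\{\tau_q\}_{q\in V_\Gamma}$; the projected classes $\{p\circ\tau_q\}$ are then a generating family for $H(\Gamma,\alpha)$, which forces the map $p^*\colon H(\Gamma,A)\to H(\Gamma,\alpha)$, $G\mapsto p\circ G$, to be surjective. The given embedding $f$ is a class in $H^1(\Gamma,\alpha)$, hence lifts to some $F\in H^1(\Gamma,A)$ with $f=p\circ F$, and linearity of $p$ forces $F$ to be an embedding of the lift (the constants $c_{pq}$ with $F(q)-F(p)=c_{pq}A(pq)$ are the same ones appearing in $f(q)-f(p)=c_{pq}\alpha(pq)$, hence positive). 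This is the missing idea: the ``closing up around cycles'' you worry about is exactly the statement that $f$ lies in the image of $p^*$ on degree-one cohomology, and surjectivity there is what the generating-family theorem buys. If you want to keep your more geometric route, you would have to prove an analogue of that surjectivity by hand, which is essentially the same amount of work. Your final appeal to local-to-global strict convexity of the conewise linear function is fine but moot until the embedding of the lift is actually produced.
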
  

The proof of Theorem \ref{thm:ch2mainresult} follows from four lemmata.  In Lemma \ref{lem:equivblowup}, we strengthen a result of Guillemin and Zara \cite[Theorem 2.3.2]{GZ1} that describes the change in cross sections in passing over a critical value as a blow-up/blow-down operation.  Using a \emph{cutting} technique described in \cite{GZ1}, Lemma \ref{lem:cutlift} shows that any reducible $d$-valent 1-skeleton satisfying $\left(\dagger\right)$ can be realized as the cross section of a reducible $(d+1)$-valent 1-skeleton satisfying $\left(\dagger\right)$.  Lemma \ref{lem:blowupcommuteslift} shows that total lift-ability is preserved by a blow-up/blow-down operation.  Finally, using Lemma \ref{lem:blowupcommuteslift}, Lemma \ref{lem:downcrosslift} shows that every cross section of a reducible 1-skeleton satisfying $\left(\dagger\right)$ necessarily has a total lift. 

This paper is divided into five sections.  In Section 2 we introduce some preliminary notions, closely following \cite{GZ1}.  We also discuss the relationship between complete simplicial fans and 1-skeleta.  In Section 3 we introduce the notion of a generalized 1-skeleton, and extend the necessary components of the machinery developed in \cite{GZ1}.  We also prove Lemma \ref{lem:equivblowup} and Lemma \ref{lem:cutlift}.  In Section 4 we prove Lemma \ref{lem:blowupcommuteslift} and Lemma \ref{lem:downcrosslift}.  We then prove Theorem \ref{thm:ch2mainresult} and Corollary \ref{cor:simplepolytope}.  In Section 5 we give some concluding remarks. 

This paper represents part of a thesis written at University of Massachusetts Amherst under the supervision of Professor Tom Braden, to whom I am greatly indebted for his endless encouragement, patience, and advice, and without whom I could not have completed this.

\section{1-Skeleta}
\label{sec:1}
\subsection{Preliminaries}
All graphs in this paper are assumed to be simple, i.e. no multiple edges and no single edge loops.  Let $\Gamma$ denote a connected regular graph with vertex set $V_\Gamma$ and \emph{oriented} edge set $E_\Gamma$, i.e. elements of $E_\Gamma$ are \emph{ordered} pairs of vertices.  An oriented edge $e\in E_\Gamma$ has an \emph{initial} vertex denoted $i(e)$, and a \emph{terminal} vertex denoted $t(e)$.  We write $e=[i(e)][t(e)]$ or if $i(e)$ and $t(e)$ are given explicitly as vertices $p$ and $q$, resp., then $e=pq$.  We write $\bar{e}$ to denote the oppositely oriented edge of $e$, i.e. if $e=pq$ then $\bar{e}=qp$.  The oriented edges \emph{at} vertex $p$ are those edges with initial vertex $p$, denoted $E^p$.  The \emph{valency} of $\Gamma$ is the cardinality of $E^p$, i.e. if $\left|E^p\right|=d$ we say $\Gamma$ is $d$-valent.  The first piece of structure we impose on $\Gamma$ is a perfect matching of the sets $E^{i(e)}$ and $E^{t(e)}$ for all $e\in E_\Gamma$. 

\begin{definition}
\label{def:connection}
A connection $\theta\coloneqq\{\theta_e\}_{e\in E_\Gamma}$ on $\Gamma$ is a collection of bijective maps $\theta_e\colon: E^{i(e)}\rightarrow E^{t(e)}$ indexed by the oriented edges such that $\theta_e(e)=\bar{e}$ and $\theta_e\circ\theta_{\bar{e}}=1$
We call the pair $(\Gamma,\theta)$ a ($d$-valent) graph-connection pair.
\end{definition}
Next we assign vector-valued weights to the oriented edges of $\Gamma$ in a way that coincides with a given connection $\theta$ on $\Gamma$.  See Fig.~\ref{fig:axialill}.
\begin{definition}
\label{def:axial}
A function $\alpha\colon E_\Gamma\rightarrow\R^n$ is called an \emph{axial function} for the graph connection pair $(\Gamma,\theta)$ if it satisfies the following axioms.
\begin{enumerate}[{A}1.]
\item 
\label{A.1}
 For every $p\in V_\Gamma$, the set $\{\alpha(e)\mid e\in E^p\}$ is pairwise linearly independent.
\item 
\label{A.2}
 For each $e\in E_\Gamma$, we have $\alpha(e)=-\alpha(\bar{e})$.
\item
\label{A.3}
For every $e\in E_\Gamma$ and each $e'\in E_{i(e)}\setminus\{e\}$ there exist positive constants $\lambda_e(e^\prime)$ such that
$$\alpha(e')-\lambda_e(e')\alpha(\theta_e(e'))\in\R\cdot\alpha(e).$$
\end{enumerate}
\end{definition}
The graph-connection-axial function triple is called a ($d$-valent) \emph{1-skeleton} in $\R^n$, denoted $(\Gamma,\alpha,\theta)\subset\R^n$ or simply $(\Gamma,\alpha,\theta)$.  The numbers $\lambda_e(e')$ form a \emph{compatibility system} $\lambda$ for the 1-skeleton.  Note however that $\lambda$ is completely determined by $\alpha$ and $\theta$ by the independence condition in A1; it is for this reason that $\lambda$ is omitted from the notation here.  In our subsequent generalizations, when condition A1 is dropped, it will be important to keep track of $\lambda$.  
% For one-column wide figures use
\begin{figure}
  \includegraphics{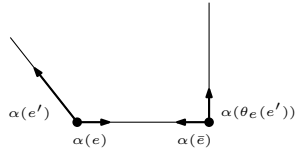}
\caption{Axial Function on  a Graph}
\label{fig:axialill}       % Give a unique label
\end{figure}

\begin{definition}
\label{def:kind}
Given a graph $\Gamma$ we say that a function $\alpha\colon E_\Gamma\rightarrow\R^n$ is
\begin{enumerate}
\item \emph{$k$-independent} if for every vertex $p\in V_\Gamma$ and for any $k$-subset $e_1,\ldots,e_k$ of oriented edges at $p$, the set $\{\alpha(e_1),\ldots,\alpha(e_k)\}$ is linearly independent
\item \emph{effective} if the set of vectors $\alpha(E_p)\coloneqq\{\alpha(e)\mid e\in E_p\}\subset\R^n$ spans $\R^n$ for each $p\in V_\Gamma$.  
\end{enumerate}
We will say that the 1-skeleton $(\Gamma,\alpha,\theta)\subset\R^n$ is $k$-independent, resp.\ effective, if $\alpha$ is $k$-independent, resp.\ effective.  
\end{definition}
Note that every 1-skeleton is $2$-independent by definition.  Furthermore, note that if $(\Gamma,\alpha,\theta)$ is $k$-independent, then it is also $j$-independent for all $j\leq k$.  We shall see that independence conditions on a 1-skeleton severely restrict its combinatorics.  For example, the graph $\Gamma$ of a 1-skeleton $(\Gamma,\alpha,\theta)$, may, in general, admit another connection $\theta'$ distinct from $\theta$ for which $\alpha$ is compatible.  On the other hand, if $\alpha$ is $3$-independent, then $\theta$ is the unique connection on $\Gamma$ compatible with $\alpha$.

Note that a 1-skeleton is not an embedded graph \emph{\'a priori}; the axioms do not specify position vectors for the vertices of $\Gamma$.  On the other hand some embedded graphs are 1-skeleta, and conversely some 1-skeleta can be realized as embedded graphs.  The following definition makes this notion precise.
\begin{definition}
\label{def:embedding}
An \emph{embedding} of a 1 skeleton $(\Gamma,\alpha,\theta)\subset\R^n$ is a function
$$f\colon V_\Gamma\rightarrow\R^n$$
with the property that for each oriented edge $pq\in E_\Gamma$ there is a positive constant $c_{pq}\in\R_+$ such that 
$$f(q)-f(p)=c_{pq}\alpha(pq).$$
\end{definition}

If a 1-skeleton $(\Gamma,\alpha,\theta)\subset\R^n$ has an embedding $f\colon V_\Gamma\rightarrow\R^n$, then $(\Gamma,\alpha,\theta)$ can be represented as an embedded graph in the sense that $V_\Gamma$ is identified with the subset $\left\{f(p)\left|\right. p\in V_\Gamma\right\}\subset\R^n$ and the oriented edges $pq\in E_\Gamma$ are identified with the oriented straight line segments joining $f(p)$ to $f(q)$.  In this case the axial function takes values $\alpha(pq)$ along these directed line segments ${\dsp (1-t)f(p)+tf(q)}$.  Although most of the 1-skeleta shown in this paper are embedded, a general 1-skeleton need not admit an embedding at all; see Fig.~\ref{fig:FultonsEG} or Fig.~\ref{fig:NoPolarization}.

\subsubsection{Example:  1-skeleta from fans}
We assume a basic familiarity with fans here.  The reader is referred to \cite{F} or \cite{Z} for the necessary prerequisites.
Fix a complete simplicial fan $\Sigma\subset\left(\R^d\right)^*$, and denote by $\Sigma_k$ the set of $k$-cones of $\Sigma$.  Let $V_\Sigma\coloneqq\Sigma_d$ denote the set of $d$-cones, and let $E_\Sigma$ denote the set of ``oriented'' $(d-1)$-cones regarded as ordered intersections of $d$-cones, i.e. ${\dsp \tau=\sigma_1\cap\sigma_2}$ and ${\dsp \bar{\tau}=\sigma_2\cap\sigma_1}$.  The pair $(V_\Sigma,E_\Sigma)$ defines a $d$-valent graph $\Gamma_\Sigma$.  For each oriented $(d-1)$-cone $\tau=\sigma_1\cap\sigma_2$ choose and fix a normal vector $\alpha_\tau\in\R^d$ pointing inside $\sigma_1$, i.e. $\langle x,\alpha_\tau\rangle \ > \ 0$ for all $x\in\operatorname{int}(\sigma_1)$.  We can even choose our normal vectors so that $\alpha_\tau=-\alpha_{\bar{\tau}}$.  The claim is that the function 
$$\xymap{E_\Sigma\ar[r]^-{\alpha} & \R^d\\
\tau\ar@{|->}[r] & \alpha_\tau\\}$$   
is a $d$-independent axial function on $\Gamma_\Sigma$.  Clearly $\alpha$ is $d$-independent since the normal vectors of a $d$-cone are a basis for $\R^d$.  To see that A3 holds fix an oriented $(d-1)$-cone $\tau=\sigma_1\cap\sigma_2$ and any other ``oriented edge'' issuing from $\sigma_1$, say $\tau'=\sigma_1\cap\sigma_2'$.  Let $\rho'$ and $\rho''$ be rays (i.e. one-dimensional cones) such that $\tau+\rho^{(i)}=\sigma_i$ for $i=1,2$.  Then we have $\tau'=\tau\cap\tau'+\rho'$, and there is a unique $(d-1)$-cone defined by $\tau''=\tau\cap\tau'+\rho''$.  Define the connection map $\theta_\tau(\tau')\coloneqq\tau''$.  We observe that there are positive constants $\lambda_\tau(\tau')$ such that 
\begin{equation}
\label{eq:alphafan}
\alpha_{\tau'}-\lambda_\tau(\tau')\alpha_{\tau''}=c\cdot\alpha_\tau.
\end{equation}
Indeed the LHS of Eq. \eqref{eq:alphafan} necessarily vanishes on $\tau'\cap\tau''=\tau\cap\tau'$.  Let $\rho$ be the ray determined by $\tau=\tau\cap\tau'+\rho$, and choose any nonzero covector $\eta\in\rho$.  Then defining
\begin{equation}
\label{eq:compatibilityfan}
\lambda_\tau(\tau')\coloneqq\frac{\langle\eta,\alpha_{\tau'}\rangle}{\langle\eta,\alpha_{\tau''}\rangle}
\end{equation}
forces LHS of Eq. \eqref{eq:alphafan} to vanish on $\rho$ as well, hence on all of $\tau$.  Thus LHS of Eq. \eqref{eq:alphafan} must be a multiple of $\alpha_{\tau}$.  Thus the triple $(\Gamma_\Sigma,\alpha,\theta)$ defines a $d$-valent, $d$-independent 1-skeleton in $\R^d$.

A \emph{conewise-linear function} on $\Sigma$ is a continuous function $F\colon\left|\Sigma\right|\rightarrow\R$ whose restriction to every cone in $\Sigma$ is linear.  Write $F_{\sigma}$ for the linear function that $F$ restricts to on $\sigma$.  $F$ is called \emph{strictly convex} if for any two distinct cones $\sigma,\sigma'\in\Sigma$ and any $x\in\sigma$ we have $F_{\sigma'}(x)>F_{\sigma}(x)$.  A standard result states that a complete (simplicial) fan $\Sigma$ admitting a strictly convex conewise-linear function $F\colon\left|\Sigma\right|\rightarrow\R$ is the normal fan of a convex (simple) polytope $P$ given as the convex hull of the points $F_\sigma$.  For this reason, a complete fan that admits a strictly convex conewise-linear function is called \emph{polytopal}.  Note that for any oriented edge $\tau=\sigma_1\cap\sigma_2$ we have 
\begin{equation}
\label{eq:embeddingfan}
F_{\sigma_2}-F_{\sigma_1}=c_\tau\alpha_{\tau} 
\end{equation}
for some \emph{positive} scalar $c_\tau$.  Indeed since $F$ is continous, $F_{\sigma_2}$ and $F_{\sigma_1}$ must agree on $\tau$, hence the LHS of Eq. \eqref{eq:embeddingfan} is some multiple of $\alpha_{\tau}$.  By the strict convexity condition, for $x\in \sigma_1\setminus\tau$, we have $F_{\sigma_2}(x)-F_{\sigma_1}(x) \ > \ 0$ hence the LHS is necessarily a positive multiple of $\alpha_{\tau}$.  Thus a strictly convex conewise-linear function on $\Sigma$ defines an embedding for $(\Gamma_\Sigma,\alpha,\theta)$.  

\begin{remarks*}
\begin{enumerate}
\item The 1-skeleton corresponding to a complete simplicial polytopal fan is the 1-skeleton of a simple polytope in the usual sense.
\item There are examples of complete simplicial non-polytopal fans.  Fulton \cite[pg. 71]{F} gives a $3$-dimensional example associated to a smooth complete non-projective toric variety; Fig.~\ref{fig:FultonsEG} shows the corresponding 1-skeleton.  
\end{enumerate}
\end{remarks*}
\begin{figure}
  \includegraphics{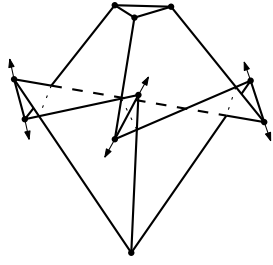}
\caption{No Embedding/Non-Polytopal}
\label{fig:FultonsEG}       % Give a unique label
\end{figure}

\subsection{Subskeleta and Holonomy}
Fix a $d$-valent 1-skeleton $(\Gamma,\alpha,\theta)\subset\R^n$. 

Let $p$ and $q$ be vertices of $\Gamma$.  A \emph{path} from $p$ to $q$ is a sequence of vertices beginning with $p$ and ending with $q$ such that any two consecutive vertices in the path form an edge of $\Gamma$; a path from $p$ to $q$ is denoted by
$$\gamma\colon p\shortrightarrow\cdots\shortrightarrow q.$$  
A \emph{loop} is a path that begins and ends at the same vertex.

A \emph{subgraph} of $\Gamma$ is a regular graph $\Gamma_0=(V_0,E_0)$ where $V_0\subset V_\Gamma$ and $E_0\subset E_\Gamma$.  If the connection on $\Gamma$ restricts to $\Gamma_0$ in the sense that ${\dsp \theta_e\left(E_0^{i(e)}\right)\subseteq\left(E_0^{t(e)}\right)}$ for every edge $e\in E_0$ we say the subgraph is \emph{totally geodesic}.  The restriction of the connection $\theta$ and axial function $\alpha$ to the totally geodesic subgraph $\Gamma_0$ defines a 1-skeleton triple $(\Gamma_0,\alpha_0,\theta_0)$ called a \emph{subskeleton}.

The 1-skeleton $(\Gamma,\alpha,\theta)\subset\R^n$ always admits a certain class of subskeleta called \emph{$k$-slices}.  For each $k$-dimensional subspace $H\subset\R^n$ define $\Gamma_H$ to be any connected component of the graph $G=(V_G,E_G)$ determined by the condition that the edges' directions lie in $H$, i.e. ${\dsp E_G\coloneqq\left\{e\in E_\Gamma\left|\right.\alpha(e)\in H\right\}}$ and ${\dsp V_G\coloneqq\left\{p\in V_\Gamma\left|\right. p=i(e), \ e\in E_G\right\}.}$  The claim is that the subgraph $\Gamma_H$ has constant valency and is totally geodesic.  Indeed fix $e\coloneqq pq$ an oriented edge of $\Gamma_H$, and let $E_H^p$ denote the oriented edges at $p$ lying in $\Gamma_H$.  Let $e'\in E_H^p$ be any oriented edge at $p$ different from $e$.  Then $\alpha(\theta_e(e'))$ must lie in the $2$-plane generated by $\alpha(e)$ and $\alpha(e')$ by condition A3 in Definition \ref{def:axial}.  Since $\alpha(e)$ and $\alpha(e')$ both lie in the subspace $H$, $\alpha(\theta_e(e'))$ must also lie in $H$, hence $\theta_e(e')\in E_H^q$.  This shows that $\theta_e\left(E_H^p\right)\subseteq E_H^q$, and a similar argument shows that $\theta_{\bar{e}}\left(E_H^q\right)\subseteq E_H^p$.  Since these maps are inverses of one another we see that $\left|E_H^p\right|=\left|E_H^q\right|$, and, since $\Gamma_H$ is connected, it must therefore have constant valency.  

The $k$-faces of a simple polytope are $k$-valent $k$-slices of its corresponding 1-skeleton.  Note that a $k$-slice of a 1-skeleton need not be $k$-valent in general.

Fix a subskeleton $(\Gamma_0,\alpha_0,\theta_0)\subseteq(\Gamma,\alpha,\theta)$.  An oriented edge not in $E_0$ but whose initial vertex lies in $V_0$ is \emph{normal} to $\Gamma_0$.  The set of oriented normal edges to $\Gamma_0$ will be denoted by $N_0$.  For each vertex $p\in V_0$ the oriented edge set $E^p$ is partitioned into two pieces:  the oriented edges at $p$ in $\Gamma_0$, denoted $E^p_0$, and those at $p$ normal to $\Gamma_0$, denoted $N^p_0$.  Moreover, since $\Gamma_0$ is totally geodesic the connection also splits:  ${\dsp \theta=\theta_0\sqcup\theta^\perp.}$  The collection of maps ${\dsp \theta^\perp\coloneqq\left\{\theta^\perp_{pq}\colon N^p_0\rightarrow N_0^q\left|\right. pq\in E_0\right\}}$ define the \emph{normal connection} for the subskeleton.

Given any vertices $p,q\in V_0$ and a path joining them in $\Gamma_0$, $\gamma\colon p=p_0\sra\cdots\sra p_j=q$ define the \emph{path-connection map} for $\gamma$ to be the composition
$$K_\gamma\coloneqq\theta_{p_{j-1}p_j}\circ\ldots\circ\theta_{p_0p_1} \colon E_0^{p_0}\rightarrow E_0^{p_j}.$$
Define the \emph{normal path-connection map} for $\gamma$ to be the corresponding compositon of normal connection maps:
$$K_\gamma^\perp\coloneqq\theta_{p_{j-1}p_j}^\perp\circ\ldots\circ\theta_{p_0p_1}^\perp \colon N_0^{p_0}\rightarrow N_0^{p_j}.$$
For $\gamma$ a loop, the (resp. normal) path-connection map $K_\gamma$ gives a permutation of the set (resp. $N^p_0$) $E_0^p$, called a (resp. \emph{normal}) \emph{holonomy map}.

\begin{definition}
\label{def:trivialnormalholonomy}
A subskeleton $(\Gamma_0,\alpha_0,\theta_0)\subseteq(\Gamma,\alpha,\theta)$ has \emph{trivial normal holonomy} if the holonomy map $K_\gamma^\perp$ is trivial for all loops $\gamma\subset\Gamma_0$. 
\end{definition} 

The regular octahedron shown in Fig.~\ref{fig:NoLift} is a $4$-valent $3$-independent 1-skeleton whose triangular faces are $2$-valent subskeleta with non-trivial normal holonomy.  Indeed the normal holonomy map for the loop $\gamma\colon u\sra v\sra w\sra u$ applied to oriented edge $ux$ gives 
$$K^\perp_\gamma\colon ux\mapsto vx\mapsto wy\mapsto uy.$$   
\begin{figure}
%\begin{center}
\includegraphics{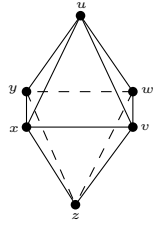}
%\end{center}
\caption{Non-Trivial Normal Holonomy}
\label{fig:NoLift}
\end{figure}

On the other hand, the $k$-faces of a simple polytope always have trivial normal holonomy.  Indeed a $k$-face $\Gamma_0$ containing vertex $p$ is a $k$-slice for the $k$-dimensional subspace spanned by the set $\alpha(E^p_0)$.  Then for any loop $\gamma\colon p\sra p_1\sra\cdots\sra p_N\sra p$ in $\Gamma_0$, and for any edge $e\in N_0^p$, condition A3 implies that $\alpha(e)$ and $\alpha(K^\perp_\gamma(e))$ together with the spanning set $\alpha(E^p_0)$ span a $(k+1)$-dimensional subspace.  Thus by $d$-independence of the set $\alpha(E^p)$ we must have $K_\gamma^\perp(e)=e$.

\subsection{Polarizations and Combinatorial Betti Numbers}
An \emph{orientation} of $\Gamma$ is a choice of one orientation for each edge in $\Gamma$; the chosen oriented edge is called a \emph{directed edge} for the orientation.  A path 
$$\gamma\colon p\shortrightarrow\cdots\shortrightarrow q$$ is said to be \emph{oriented} (with respect to a given orientation on $\Gamma$) if $p_ip_{i+1}$ is a directed edge for all $i$.  The orientation is called \emph{acyclic} if there are no oriented loops.

A covector $\xi\in(\R^n)^*$ is \emph{generic} for the 1-skeleton $(\Gamma,\alpha,\theta)\subset\R^n$ if it pairs nonzero with each edge direction, i.e. $\langle\xi,\alpha(e)\rangle\neq 0$ for each $e\in E_\Gamma$.  A generic covector $\xi$ for $(\Gamma,\alpha,\theta)$ induces an orientation on $\Gamma$ by declaring the directed edges to be those oriented edges that pair positively with $\xi$.  See Fig.~\ref{fig:NotPointed2}.

\begin{definition}
\label{def:polarizingvector}
The generic covector $\xi$ is called \emph{polarizing} if it induces an acyclic orientation on $\Gamma$.  The 1-skeleton $(\Gamma,\alpha,\theta)$ \emph{admits a polarization} if it admits a generic polarizing covector $\xi$.
\end{definition}

\begin{figure}
%\begin{center}
	\includegraphics{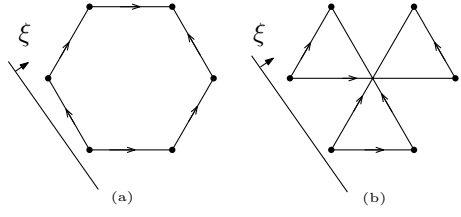}
%\end{center}
\caption{Polarized 1-Skeleta}
\label{fig:NotPointed2}
\end{figure}

Note that a generic polarizing covector for $(\Gamma,\alpha,\theta)$ is also generic and polarizing for any subskeleton $(\Gamma_0,\alpha_0,\theta_0)$.

\begin{remark*}
In \cite{GZ1}, Guillemin and Zara use the term ``polarizing" to describe what we call ``generic" and what we call a ``polarizing covector" they call a ``polarizing covector satisfying the `no-cycle condition'".
\end{remark*}

A general 1-skeleton need not have a polarization at all, e.g. Fig.~\ref{fig:NoPolarization}.    
\begin{figure}
	\includegraphics{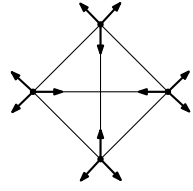}
\caption{No Polarization}
\label{fig:NoPolarization}
\end{figure}
On the other hand if a 1-skeleton $(\Gamma,\alpha,\theta)\subset\R^n$ admits an embedding, then every generic covector $\xi\in(\R^n)^*$ is polarizing.  Indeed an ``embedded'' oriented loop $\gamma\colon f(p_0)\sra f(p_1)\sra\cdots\sra f(p_N)\sra f(p_0)$ in $(\Gamma,\alpha,\theta)$ would give an inconsistent system of inequalities:
$$\langle \xi,f(p_0)\rangle \ > \ \langle\xi,f(p_1)\rangle \ > \ \cdots \ > \ \langle\xi, f(p_N)\rangle \ > \ \langle\xi,f(p_0)\rangle.$$
Guillemin and Zara also make this observation in \cite{GZ0}.

\begin{definition}
Given a polarizing covector $\xi\in(\R^n)^*$ for $(\Gamma,\alpha,\theta)$ we say an injective function $\phi\colon V_\Gamma\rightarrow\R$ is a \emph{Morse function} on $(\Gamma,\alpha,\theta)$ compatible with $\xi$ if for each oriented edge $pq\in E_\Gamma$ satisfying $\langle\xi,\alpha(pq)\rangle > \ 0$ we have $\phi(p)<\phi(q)$.
\end{definition}

As pointed out in \cite{GZ1}, the existence of a polarizing covector guarantees the existence of a compatible Morse function.  Indeed just define $\phi(p)$ to be the length of the longest oriented path in $\Gamma$ that starts at $p$.  This is well defined since there are no oriented loops.  We can then perturb $\phi$ a little to make it injective.

\begin{definition}
\label{def:indexbetti}
For $p\in V_\Gamma$ define the \emph{index} of $p$ (with respect to a generic covector $\xi$) to be the number of oriented edges ``flowing into" $p$; in other words
$$\operatorname{ind}_\xi(p)\coloneqq\# \{e\in E_p\mid \langle\xi,\alpha(e)\rangle<0\}.$$
Define the \emph{$i^{th}$ combinatorial Betti number} of $\Gamma$ by
$$b_i(\Gamma,\alpha)\coloneqq\#\{p\in V_\Gamma\mid \operatorname{ind}_\xi(p)=i\}.$$
\end{definition}

While the index of a vertex of $\Gamma$ clearly depends on the generic covector, $\xi$, an elegant argument due to Bolker shows that the combinatorial Betti numbers are actually independent of $\xi$.  See \cite[Theorem 1.3.1]{GZ1}. 

A vertex $p_0\in V_\Gamma$ with $\operatorname{ind}_\xi(p_0)=0$ is called a ($\xi$-) \emph{source} of $\Gamma$ for a generic covector $\xi$.  If $(\Gamma,\alpha,\theta)$ has a unique source for some (hence every) generic $\xi$, i.e. $b_0(\Gamma,\alpha)=1$, then we say that $(\Gamma,\alpha,\theta)$ is \emph{pointed}.  In Fig.~\ref{fig:NotPointed2}, the 1-skeleton labeled (a) is pointed, but the 1-skeleton labeled (b) is not.

Guillemin and Zara \cite{GZ1} introduced and studied a class of $3$-independent 1-skeleta called \emph{noncyclic}.  Here is their definition:
\begin{definition}
\label{def:noncyclic}
A $3$-independent 1-skeleton $(\Gamma,\alpha,\theta)\subset\R^n$ is called \emph{non-cyclic} if the following conditions hold:
\begin{enumerate}[{NC}1.]
\item $(\Gamma,\alpha,\theta)\subset\R^n$ admits a polarization   
\item Every $2$-slice is pointed.
\end{enumerate}
\end{definition}
The class of 1-skeleta coming from simple polytopes (or complete simplicial polytopal fans) is certainly noncyclic.  Indeed any generic covector is polarizing since the 1-skeleton has an embedding, and the $2$-slices are convex polygons, hence they are pointed.  Fig.~\ref{fig:PointedNotNoncyclic} shows a $3$-independent 1-skeleton that is not noncyclic.

\subsubsection{Example: Fans from 1-skeleta}
Let $\Sigma\subset\left(\R^d\right)^*$ be a complete simplicial $d$-dimensional fan with associated $d$-valent $d$-independent 1-skeleton $(\Gamma_\Sigma,\alpha,\theta)\subset\R^d$.  For any genric covector $\xi\in\left(\R^d\right)^*$ there must be a unique vertex $p_0\in V_\Gamma$ such that $\langle\xi,\alpha(e)\rangle \ > \ 0$ for all $e\in E^{p_0}$, corresponding to the unique $d$-cone of $\Sigma$ containing $\xi$.  Thus $(\Gamma_\Sigma,\alpha,\theta)$ is pointed.  It is straight forward to show that in fact every $k$-slice of $(\Gamma_\Sigma,\alpha,\theta)$ is pointed.  Indeed a $k$-slice $(\Gamma_H,\alpha_H,\theta_H)$ of $(\Gamma_\Sigma,\alpha,\theta)$ corresponds to the star of some $(d-k)$-cone $\tau$ of $\Sigma$, the boundary of which is identified with a complete simplicial $k$-dimensional fan via linear projection.  Then the projection of a generic covector along the linear span of $\tau$ is identified with the restriction of the generic covector to $H$, which must also be generic for $(\Gamma_H,\alpha_H,\theta_H)$.  A $d$-valent $d$-independent 1-skeleton will be  called \emph{toral} if every $k$-slice for $1\leq k\leq d$ is pointed.  
Hence there is a correspondence between complete simplicial fans and toral 1-skeleta
\begin{equation}
\label{eq:onetoone}
\xymapp{\left\{\text{Complete simplicial fans in $\left(\R^d\right)^*$}\right\}\ar[r] & \left\{\text{Toral 1-skeleta in $\R^d$}\right\}\\
\Sigma\ar@{|->}[r] & (\Gamma_\Sigma,\alpha,\theta).\\}
\end{equation}
The claim is that \eqref{eq:onetoone} is bijective.  

Let $(\Gamma,\alpha,\theta)$ denote a toral 1-skeleton in $\R^d$.  For each vertex $p$, define the simplicial polyhedral cone 
$$X_p\coloneqq\left\{u\in\left(\R^d\right)^*\left|\right. \langle u,\alpha(e)\rangle\geq 0, \ e\in E^p\right\}.$$
Let $\Sigma$ denote the set of cones $X_p$ and all their faces.  In order to establish that $\Sigma$ is a complete simplicial fan in $\left(\R^d\right)^*$ we need to show that
\begin{enumerate}
\item ${\dsp \bigcup_{p\in V_\Gamma}X_p=\left(\R^d\right)^*}$
\item ${\dsp X_p\cap X_q}$ is a face of both $X_p$ and $X_q$.
\end{enumerate}
To verify condition (i), note that the set of generic covectors for $(\Gamma,\alpha,\theta)$ is a dense open set $\mathcal{G}\subseteq\left(\R^d\right)^*$, and that each generic covector lies in some $X_p$.  Thus $\mathcal{G}\subseteq\bigcup_{p\in V_\Gamma}X_p\subseteq\left(\R^d\right)^*$, and taking closures yields (i).  To verify (ii), fix vertices $p$ and $q$, and let $\tau$ be the smallest face of $X_p$ containing $X_p\cap X_q$.  Define the $k$-dimensional subspace $H=\tau^\perp\coloneqq\left\{v\in\R^d\left|\right.\langle u,v\rangle=0 \ \forall \ u\in\tau\right\}$.  Let $\Gamma^p_H$ denote the graph of the $k$-slice containing $p$ and let $\Gamma^q_H$ denote the graph of the $k$-slice containing $q$.  The claim is that $\Gamma^p_H=\Gamma_H^q$.  Otherwise the two $k$-slices $(\Gamma^p_H,\alpha_H^p,\theta_H^p)$ and $(\Gamma^q_H,\alpha_H^q,\theta_H^q)$ are pointed by a common generic covector $\xi\in H^*$.  Let $p_0\in V_H$ and $q_0\in V'_H$ be the respective $\xi$-sources.  Note that there must exist a covector $\eta\in \tau$ such that $\langle \eta,\alpha(e)\rangle \ > \ 0$ for all $e\in N^{p_0}_H\cup N^{q_0}_H$.  Indeed if this were not the case, then there would exist at least one oriented edge $e\in N^{p_0}_H\cup N^{q_0}_H$ such that $\langle\eta,\alpha(e)\rangle =0$ for all $\eta\in \tau$, forcing $e$ to lie in $E_H\cap N_H$, which is impossible.  Extending $\xi\in H^*$ by zero to a covector in $\left(\R^d\right)^*$, and adding to it the covector $\eta$, we get a generic covector $\omega=\xi+\eta$ such that $\langle\omega,\alpha(e)\rangle \ > \ 0$ for all $e\in E^{p_0}\cup E^{q_0}$, which is impossible since $(\Gamma,\alpha,\theta)\subset\R^d$ is pointed.  Therefore $\Gamma_H^p=\Gamma_H^q=\Gamma_H$ is a single $k$-slice containing both $p$ and $q$.  

Now we want to show that ${\dsp \tau\subseteq X_p\cap X_q}$.  Fix $u\in \tau$.  Then $\langle u,\alpha(e)\rangle=0$ for all $e\in E^p_H$, and $\langle u,\alpha(e')\rangle \ > \ 0$ for all $e\in N_H^p$.  For $pp_1\in E_H$, and $e\in N^p_H$ we have by A3, $\alpha(e)-\lambda_{pp_1}(e)\alpha(\theta_{pp_1}(e))\in\R\cdot\alpha(pp_1)$.  Thus $\langle u,\alpha(e)\rangle=\lambda_{pp_1}(e)\langle u,\alpha(\theta_{pp_1}(e))\rangle \ > \ 0$ implies that $\langle u,\alpha(\theta_{pp_1}(e))\rangle \ > \ 0$.  Since $\Gamma_H$ is connected, we inductively conclude that $\langle u,\alpha(e'')\rangle \ > \ 0$ for all $e''\in N_H^q$.  Thus $u\in X_p\cap X_q$, hence $\tau=X_p\cap X_q$ is a face of $X_p$.  A similar argument shows that $\tau=X_p\cap X_q$ is a face of $X_q$.  This shows that $\Sigma$ is a complete simplicial $d$-dimensional fan.  

\begin{remarks*}
\begin{enumerate}
\item One can show that a noncyclic $3$-independent 1-skeleton is always pointed.  The converse, however, is not true in general.  Fig.~\ref{fig:PointedNotNoncyclic} shows a $3$-independent (embedded) pointed 1-skeleton that is not noncyclic.
\item It follows from (i) that a noncyclic $d$-valent, $d$-independent 1-skeleton is always toral.  In particular, the one-to-one correspondence in \eqref{eq:onetoone} restricts to a one-to-one correspondence between complete simplicial polytopal fans in $(\R^d)^*$ and $d$-valent $d$-independent noncyclic 1-skeleta admitting embeddings.
\item It is unclear whether every toral 1-skeleton is noncylic.  In other words, must every toral 1-skeleton admit a polarization?
\end{enumerate}
\end{remarks*}

\begin{figure}
%\begin{center}
	\includegraphics{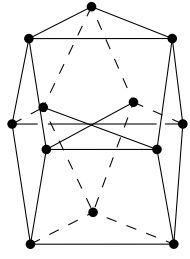}
%\end{center}
\caption{Embedded, Pointed, Not Noncyclic}
\label{fig:PointedNotNoncyclic}
\end{figure}

\section{Generalizations}
\subsection{Generalized 1-Skeleta}
Fix a $d$-valent graph connection pair $(\Gamma,\theta)$.
\begin{definition}
\label{def:compatibilitysystem}
A \emph{compatibility system} for $(\Gamma,\theta)$ ${\dsp\lambda\coloneqq\{\lambda_e\}_{e\in E_\Gamma}}$ 
is a collection of maps ${\dsp\lambda_e\colon E_{i(e)}\rightarrow\R_+}$ indexed by the oriented edges of $\Gamma$ that satisfy the rule
$$\lambda_{\bar{e}}\circ\theta_e=\frac{1}{\lambda_e}$$
for every $e\in E_\Gamma$.
\end{definition}

The triple $(\Gamma,\theta,\lambda)$ defines a \emph{pre 1-skeleton}.

\begin{definition}
\label{def:generalaxial}
A \emph{generalized axial function} $\alpha$ for the pre-1-skeleton $(\Gamma,\theta,\lambda)$ is a map $\alpha\colon E_\Gamma\rightarrow\R^n$ that satisfies the following axioms:
\begin{enumerate}[{gA}1.]
\item For each $e\in E_\Gamma$ there is some $m_e>0$ such that $\alpha(e)=-m_e\alpha(\bar{e})$\\
\item For every $e\in E_\Gamma$ and each $e'\in E^{i(e)}\setminus\{e\}$ we have $$\alpha(e')-\lambda_e(e')\alpha(\theta_e(e'))\in\R\cdot\alpha(e).$$

\end{enumerate}
\end{definition}

The quadruple $(\Gamma,\alpha,\theta,\lambda)$ is a \emph{generalized 1-skeleton}.  Note that if the generalized axial function $\alpha$ is $2$-independent the compatibility system $\lambda$ is uniquely determined.  If, in addition, the constants $m_e$ are all equal to $1$ in condition gA1 then $\alpha$ is actually an axial function and the quadruple $(\Gamma,\alpha,\theta,\lambda)$ defines a 1-skeleton.  

It will be useful to have a notion of equivalence of generalized 1 skeleta.

\begin{definition}
\label{def:eqgenaxial}
Two generalized 1 skeleta $(\Gamma,\alpha,\theta,\lambda)\subset\R^n$ and $(\tilde{\Gamma},\tilde{\alpha},\tilde{\theta},\tilde{\lambda})\subset\R^n$ are equivalent if the graph connection pairs $(\Gamma,\theta)$ and $(\Gamma',\theta')$ are equal and there exists a function $\kappaup\colon E_\Gamma\rightarrow\R_+$ such that for every $e\in E_\Gamma$ and $e'\in E^{i(e)}$ we have
\begin{enumerate}
\item ${\dsp \alpha(e)=\kappaup(e)\cdot\alpha'(e),}$ and 
\item ${\dsp \lambda_e(e')=\frac{\kappaup(e')}{\kappaup(\theta_e(e'))}\tilde{\lambda}_{e}(e').}$
\end{enumerate}
We will denote equivalence of generalized 1-skeleta by
$$(\Gamma,\alpha,\theta,\lambda)\equiv(\tilde{\Gamma},\tilde{\alpha},\tilde{\theta},\tilde{\lambda})$$ 
\end{definition}

Note that a $2$-independent generalized 1-skeleton is equivalent to a 1-skeleton.  Indeed fix any orientation of $\Gamma$, and let $E^+_\Gamma$ denote the oriented edges oriented positively.  Then define ${\dsp\kappaup(e)=\begin{cases}
1 & \text{if $e\in E^+_\Gamma$}\\
\frac{1}{m_e} & \text{if $e\notin E^+_\Gamma$}\\
\end{cases}}.$
We will abuse the notation slightly and refer to a generalized 1-skeleton with a $2$-independent axial function as a 1-skeleton.

The notions of subskeleton, (resp. normal) path-connection map, and (resp. normal) holonomy have obvious generalized analogues. Additionally, the compatibility system of a generalized 1-skeleton allows one to assign positive scalars to (resp. normal) path connection maps.  Fix a $d$-valent generalized 1-skeleton $(\Gamma,\alpha,\theta,\lambda)$ and a $k$-valent subskeleton $(\Gamma_0,\alpha_0,\theta_0,\lambda_0)$.
\begin{definition}
\label{def:pathconnectionnumbers}
Let $\gamma\colon p=p_0\sra p_1\sra\cdots p_{j-1}\sra p_j=q$ be a path in $\Gamma_0$ joining vertices $p$ and $q$ in $V_0$.  The \emph{path-connection number} for $\gamma$ is the product
$$\left|K_{\gamma}\right|\coloneqq\left(\prod_{e\in E_0^{p_0}}\lambda_{p_0p_1}(e)\right)\cdots\left(\prod_{e\in E_0^{p_{j-1}}}\lambda_{p_{j-1}p_j}(e)\right).$$
The \emph{normal path-connection number} for $\gamma$ is the product
$$\left|K_{\gamma}^\perp\right|\coloneqq\left(\prod_{e\in N_0^{p_0}}\lambda_{p_0p_1}(e)\right)\cdots\left(\prod_{e\in N_0^{p_{j-1}}}\lambda_{p_{j-1}p_j}(e)\right).$$
For each $e\in E^{p}$ the \emph{local path-connection number} for $\gamma$ at $e$ is the product 
$$\left|K_\gamma(e)\right|\coloneqq\prod_{i=1}^j\lambda_{p_{i-1}p_i} (\theta_{p_{i-2}p_{i-1}}\circ\cdots\circ\theta_{p_0p_1}(e)).$$
If $\gamma$ is a loop, replace the term ``path-connection" with the term ``holonomy".
\end{definition}
Of particular interest will be those subskeleta whose local normal holonomy numbers are trivial.  More precisely,
\begin{definition}
\label{def:straightlevel}
A subskeleton $(\Gamma_0,\alpha_0,\theta_0,\lambda_0)$ of a generalized 1-skeleton $(\Gamma,\alpha,\theta,\lambda)$ is \emph{level} if for each $e\in N_0^p$ and every loop $\gamma$ in $\Gamma_0$ such that $K_\gamma(e)=e$, we have ${\dsp\left|K_\gamma(e)\right|=1}$.
\end{definition}

In Fig.~\ref{fig:2Bananas} the triangular subskeleton $x \sra y\sra z\sra x$ has trivial normal holonomy, but is not level.  
\begin{figure}
%\begin{center}
\includegraphics{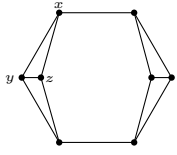}
%\end{center}
\caption{Not Level}
\label{fig:2Bananas}
\end{figure}
On the other hand the $k$-faces of a simple polytope are always level.  Indeed for face $F$ and a fixed basepoint $p_0\in F$, let $e$ be any edge normal to $F$ at $p_0$.  Then there is a unique facet $H$ containing $F$ for which $e$ is normal, and $H$ corresponds to a $(d-1)$-slice $(\Gamma_H,\alpha_H,\theta_H)$.  Then for any choice of nonzero covector $\eta\in\left(\R^d\right)^*$ normal to the hyperplane $\spa(H)\subset\R^d$, the compatibility constants around the edges of $\Gamma_H$ are given by the formula ${\dsp \lambda_{pq}(e)=\frac{\langle\eta,\alpha(e)\rangle}{\langle\eta,\alpha(\theta_{pq}(e))\rangle}}$ as in Eq. \eqref{eq:compatibilityfan}.  Thus the local normal holonomy numbers associated to a loop $\gamma\colon p_0\sra p_1\sra\cdots\sra p_N\sra p_0$ are given by 
$$\left|K_\gamma^\perp(e)\right|=\frac{\langle\eta,\alpha(e)\rangle}{\langle\eta,\alpha(\theta_{p_0p_1}(e))\rangle}\cdots\frac{\langle\eta,\alpha(\theta_{p_{N-1}p_N}\circ\cdots\circ\theta_{p_0p_1}(e))\rangle}{\langle\eta,\alpha(K_\gamma^\perp(e))\rangle}=1.$$   

The notions of polarizations, compatible Morse functions, and combinatorial Betti numbers also have obvious generalized analogues.  We will see presently that the blow-up, reduction, and cutting constructions from \cite{GZ1} also have generalized analogues.  The exposition in \cite{GZ1} is closely followed; Subsections 3.2, 3.3.2, 3.3.3, and 3.4 are generalized analogues of, respectively, Subsections 2.2.1, 2.3.1, 2.3.2, and 2.4.1 in \cite{GZ1}.

\subsection{Blow-Up}
Fix a $d$-valent generalized 1-skeleton with connection $(\Gamma,\alpha,\theta,\lambda)\subset\R^n$ and let $(\Gamma_0,\alpha_0,\theta_0,\lambda_0)$ be a $k$-valent subskeleton.  We set up the following notation as a convention to be used throughout this subsection.
Let $p,q,r\in V_0$ denote arbitrary vertices with $pq,pr\in E_0$.  Let $e,e^\prime,e^{\prime\prime}\in N_0^p$ denote arbitrary edges at $p$ normal to $\Gamma_0$ and set $f,f^\prime,f^{\prime\prime}\in N_0^q$, and $g,g^\prime,g^{\prime\prime}\in N_0^r$ such that $\theta_{pq}(e^{(i)})=f^{(i)}$ and $\theta_{pr}(e^{(i)})=g^{(i)}$ for $0\leq i\leq 2$.
For a subset of oriented edges $E\subset E_\Gamma$ set $\overline{E}\coloneqq\left\{\bar{e}\left|\right.e\in E\right\}$.  

We will define a new graph $\Gamma^\sharp=(V^\sharp,E^\sharp)$ by cutting off $\Gamma_0$ in $\Gamma$ and replacing it with a new $(d-1)$-valent sub-graph.  Define the ``new vertex set'' ${\dsp V_0^\sharp\coloneqq\left\{z^p_e\left|\right. p\in V_0, \ e\in N_0^p\right\}.}$  Define the vertex set of $\Gamma^\sharp$ to be 
$$V^\sharp\coloneqq V_\Gamma\setminus V_0\sqcup V_0^\sharp.$$

Define the ``new edge sets'' ${\displaystyle N^\sharp_0\coloneqq\left\{z^p_e[t(e)]\left|\right.p\in V_0, \ e\in N_0^p\right\}}$ and ${\displaystyle E_0^\sharp\coloneqq\left\{z^p_ez^p_{e^\prime}\right\}\cup \left\{z^p_ez^q_f\right\}.}$  Define the edge set of $\Gamma^\sharp$ by 
$$E^\sharp=E_{\Gamma}\setminus\left(E_0\cup N_0\cup\overline{N}_0\right)\cup N^\sharp_0\cup \ol{N}^\sharp_0\cup E^\sharp_0.$$

Note that oriented edge sets $N_0^\sharp$ and $\ol{N}_0^\sharp$ are in one-to-one correspondence with $N_0$ and $\ol{N}_0$.  Thus it is clear that the vertices $x\in V_\Gamma\setminus V_0\subseteq V^\sharp$ (including vertices $x=[t(e)]$, $e\in N^0$) are incident to exactly $d$ oriented edges.  Also the vertices $z^p_e\in V_0^\sharp\subseteq V^\sharp$ are incident to the $d$ edges ${\dsp \left\{z^p_ez^p_{e^\prime}\left|\right. e^\prime\in N_0^p\setminus\left\{e\right\}\right\}\sqcup\left\{z^p_ez^q_f\left|\right. pq\in E_0\right\}\sqcup\left\{z^p_e[t(e)]\right\}.}$  Hence $\Gamma^\sharp$ is a $d$-valent graph called the \emph{blow-up graph} of $\Gamma$ along $\Gamma_0$. 

The natural map of sets ${\displaystyle \beta\colon V^\sharp\rightarrow V_\Gamma}$
\[\beta(x)=
\begin{cases}
q & \text{if $x=q\in V_\Gamma\setminus V_0$}\\
p & \text{if $x=z^p_e \ \ \text{for some $e\in N_p^0$}$}\\
\end{cases}
\]
extends to a map of graphs ${\dsp \beta\colon \Gamma^\sharp\rightarrow\Gamma}$ called the \emph{blow down map}.

The induced subgraph of $\Gamma^\sharp$ on the vertex set $\beta^{-1}(V_0)=V_0^\sharp$, denoted by $\Gamma_0^\sharp$, is a $(d-1)$-valent connected subgraph called the \emph{singular locus} of the blow-up.  

\begin{figure*}
  \includegraphics[width=0.75\textwidth]{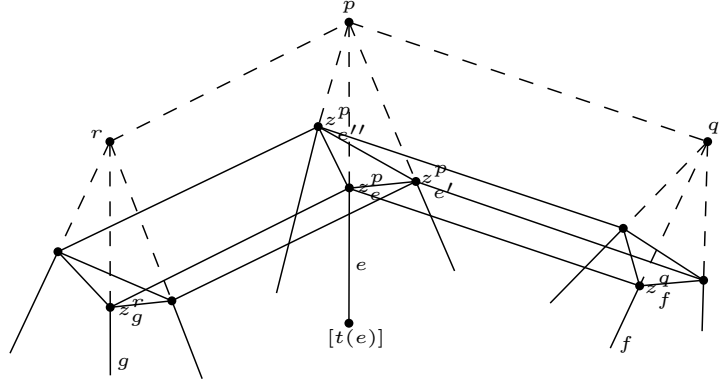}
\caption{Blow-Up Along a Subskeleton}
\label{fig:blowupill}       % Give a unique label
\end{figure*}

For each vertex $x$ in the singular locus $V_0^\sharp$ define the \emph{horizontal edges at $x$} to be the edges in the singular locus at $x$ preserved by $\beta$, i.e. $(E_0^\sharp)_x^h\coloneqq\beta^{-1}(E^0_{\beta(x)})$.  Define the \emph{vertical edges at $x$} to be those edges destroyed by $\beta$, i.e. $(E^\sharp)_x^v\coloneqq \left(E_0^\sharp\right)_x\setminus(E_0^\sharp)_x^h$.  

We want to define a connection and compatibility system on $\Gamma^\sharp$.  To avoid confusion, we will use the letter $\epsilon$ to denote an edge in $\Gamma^\sharp$ and reserve the letter $e$ for an edge in $\Gamma$.  For oriented edges $\epsilon$ not issuing from the singular locus $\Gamma_0^\sharp$, define 
\begin{align}
\label{eq:thetanotlocus}
\theta^\sharp_\epsilon & \coloneqq \beta^{-1}\circ\theta_{\beta(\epsilon)}\circ\beta\\
\label{eq:lambdanotlocus}
\lambda^\sharp_\epsilon & \coloneqq\lambda_{\beta(\epsilon)}\circ\beta.
\end{align}

The values of $\theta^\sharp$ and $\lambda^\sharp$ on oriented edges in $N_0^\sharp$ and $E_0^\sharp$ are listed in Tables \ref{tab:buthetavalues} and \ref{tab:bulambdavalues}.  From Eqs. \eqref{eq:thetanotlocus}, \eqref{eq:lambdanotlocus}, and Tables \ref{tab:buthetavalues}, \ref{tab:bulambdavalues}, it is straight forward to check that the triple $(\Gamma^\sharp,\theta^\sharp,\lambda^\sharp)$ defines a pre 1-skeleton.

\begin{table}
% table caption is above the table
\caption{$\theta^\sharp$ values}
\label{tab:buthetavalues}       % Give a unique label
% For LaTeX tables use
\begin{tabular}{llll}
\hline\noalign{\smallskip}
$\epsilon$ & $\theta_{\epsilon}^\sharp\left(z^p_e[t(e)]\right)$ & $\theta_{\epsilon}^\sharp\left(z^p_ez^r_{g}\right)$ & 
$\theta_{\epsilon}^\sharp\left(z^p_ez^p_{e^{\prime\prime}}\right)$\\
\noalign{\smallskip}\hline\noalign{\smallskip}
$z^p_e[t(e)]$ & --------- & $\beta^{-1}\circ\theta_e(pr)$ & $\beta^{-1}\circ\theta_e(e^{\prime\prime})$\\
\noalign{\smallskip}
$z^p_ez^q_{f}$ & $z^q_f[t(f)]$ & $z^q_fz^s_h$ & $z^q_fz^q_{f^{\prime\prime}}$\\ 
\noalign{\smallskip}
$z^p_ez^p_{e^\prime}$ & $z^p_{e^\prime}[t(e^\prime)]$ & $z^p_{e^\prime}z^r_{g^\prime}$ & $z^p_{e^{\prime}}z^p_{e^{\prime\prime}}$\\
\noalign{\smallskip}\hline
\end{tabular}
\end{table}

\begin{table}
% table caption is above the table
\caption{$\lambda^\sharp$ values}
\label{tab:bulambdavalues}       % Give a unique label
% For LaTeX tables use
\begin{tabular}{llll}
\hline\noalign{\smallskip}
$\epsilon$ & $\lambda_{\epsilon}^\sharp\left(z^p_e[t(e)]\right)$ & $\lambda_{\epsilon}^\sharp\left(z^p_ez^r_{g}\right)$ & 
$\lambda_{\epsilon}^\sharp\left(z^p_ez^p_{e^{\prime\prime}}\right)$\\
\noalign{\smallskip}\hline\noalign{\smallskip}
$z^p_e[t(e)]$ & -- & $\lambda_e(pr)$ & $\lambda_e\left(e^{\prime\prime}\right)$\\
\noalign{\smallskip}
$z^p_ez^q_{f}$ & $1$ & $\lambda_{pq}(pr)$ & $\lambda_{pq}\left(e^{\prime\prime}\right)$\\ 
\noalign{\smallskip}
$z^p_ez^p_{e^\prime}$ & $1$ & $1$ & $1$\\
\noalign{\smallskip}\hline
\end{tabular}
\end{table}

On oriented edges not issuing from the singular locus of $\Gamma^\sharp$, define
\begin{equation}
\label{eq:alphanotlocus}
\alpha^\sharp\coloneqq\alpha\circ\beta
\end{equation}
In order to extend the function $\alpha^\sharp$ to the remaining oriented edges issuing from the singular locus, we must assume the existence of a system of positive scalar assignments to the normal edges of $\Gamma_0$, ${\dsp n\colon N^0\rightarrow\R_+}$, satisfying the following compatibility condition:
\begin{equation}
\label{eq:E.6}
\frac{n(e')}{n(\theta_e(e'))}=\lambda_e(e') \ \ \text{for all $e\in\Gamma_0$ and all $e'\in N_0^{i(e)}$};
\end{equation}
such a system is called a \emph{blow-up system} for the subskeleton $(\Gamma_0,\alpha_0,\theta_0,\lambda_0)\subseteq (\Gamma,\alpha,\theta,\lambda)$.  It is worth remarking that the levelness of $(\Gamma_0,\alpha_0,\theta_0,\lambda_0)$ guarantees the existence of a blow-up system.  Indeed fixing a basepoint $p_0\in V_0$ and setting $n(e)\equiv 1$ for all $e\in N_0^{p_0}$, one can use the normal path connection and the normal path connection numbers to extend $n$ to $N^x_0$ for any other $x\in V_0$:  Simply take any path $\gamma\colon p_0\sra\cdots\sra x$ joining $p_0$ to $x$ in $\Gamma_0$.  Then for any $e\in N_0^x$ there is $\tilde{e}\in N_0^{p_0}$ such that $e=K_\gamma^\perp(\tilde{e})$.  Thus the function $$n(e)\coloneqq\frac{1}{\left|K_\gamma^\perp(\tilde{e})\right|}$$ is independent of the path $\gamma$, and hence defines the required blow-up system.   

We use a fixed blow-up system, $n\colon N^0\rightarrow\R_+$, for $(\Gamma_0,\alpha_0,\theta_0,\lambda_0)$ to define $\alpha^\sharp$ on $E_0^\sharp\sqcup N_0^\sharp$ as shown in Table \ref{tab:bualphavalues}.

\begin{table}
% table caption is above the table
\caption{$\alpha^\sharp$ values}
\label{tab:bualphavalues}       % Give a unique label
% For LaTeX tables use
\begin{tabular}{llll}
\hline\noalign{\smallskip}
$\epsilon$ & $\alpha^\sharp(\epsilon)$\\
\noalign{\smallskip}\hline\noalign{\smallskip}
$z^p_e[t(e)]$ & $\frac{1}{n(e)}\alpha(e)$\\
\noalign{\smallskip}
$z^p_ez^q_{f}$ & $\alpha(pq)$\\ 
\noalign{\smallskip}
$z^p_ez^p_{e^\prime}$ & $\alpha(e^\prime)-\frac{n(e^\prime)}{n(e)}\alpha(e)$\\
\noalign{\smallskip}\hline
\end{tabular}
\end{table}

Clearly $\alpha^\sharp$ satisfies gA1 in Definition \ref{def:generalaxial}.  It remains to show that $\alpha^\sharp$ satisfies gA2: 
\begin{equation}
\label{eq:blowupgA2}
\alpha^\sharp(\epsilon^\prime)-\lambda^\sharp_{\epsilon}(\epsilon^\prime)\alpha^\sharp(\theta_{\epsilon}^\sharp(\epsilon^\prime))\equiv 0 \ \ \text{mod $\alpha^\sharp(\epsilon)$.}
\end{equation}

It is straight forward to verify gA2 for oriented edges not issuing from the singular locus.  Indeed in this case Eq. \eqref{eq:blowupgA2} becomes
$$\alpha(\beta(\epsilon'))-\lambda_{\beta(\epsilon)}(\beta(\epsilon'))\alpha(\theta_{\beta(\epsilon)}(\beta(\epsilon')))=c\alpha(\beta(\epsilon)),$$
which holds since $\alpha$ is a generalized axial function for the pair $(\Gamma,\theta)$.  Thus it suffices to verify Eq. \eqref{eq:blowupgA2} for oriented edges issuing from the singular locus.

For $\epsilon= z^p_ez^q_{f}$ and $\epsilon^\prime= z^p_ez^p_{e^\prime}$ the LHS of Eq. \eqref{eq:blowupgA2} becomes
\begin{equation}
\label{eq:ga21}
\left(\alpha(e^\prime)-\frac{n(e^\prime)}{n(e)}\alpha(e)\right)-\lambda_{pq}(e^\prime)\cdot\left(\alpha(f^\prime)-\frac{n(f^\prime)}{n(f)}\alpha(f)\right).
\end{equation}
Regrouping terms and using the identity ${\displaystyle \lambda_{pq}(e^\prime)\frac{n(f^\prime)}{n(f)}=\frac{n(e^\prime)}{n(f)}}$ derived from Eq. \eqref{eq:E.6}, Eq. \eqref{eq:ga21} becomes
\begin{equation}
\label{eq:ga23}
\left[\alpha(e^\prime)-\lambda_{pq}(e^\prime)\alpha(f^\prime)\right]-\left[\frac{n(e^\prime)}{n(e)}\alpha(e)-\frac{n(e^\prime)}{n(f)}\alpha(f)\right].
\end{equation}
Clearly the first term in \eqref{eq:ga23} is a multiple of $\alpha^\sharp(z^p_ez^q_{f})=\alpha(pq)$.  The second term is also a multiple of $\alpha(pq)$ by virtue of the condition ${\dsp \frac{n(e)}{n(f)}=\lambda_{pq}(e)}$. 

The remaining cases are straight forward and their verification is left to the reader.
Thus $\alpha^\sharp$ is a generalized axial function for the pre 1-skeleton $(\Gamma^\sharp,\theta^\sharp,\lambda^\sharp)$.

\begin{definition}
\label{def:blowup1sk}
The generalized 1-skeleton $(\Gamma^\sharp,\alpha^\sharp,\theta^\sharp,\lambda^\sharp)$ is called the \emph{blow-up} of $(\Gamma,\alpha,\theta,\lambda)$ along the subskeleton $(\Gamma_0,\alpha_0,\theta_0,\lambda_0)$.  
\end{definition}

\begin{remark*}
The blow-up construction for 1-skeleta was introduced by Guillemin and Zara \cite{GZ1}.  However the assumptions on the 1-skeleta made in \cite{GZ1} are a bit more restrictive than those we make here.  In particular they assume that
\begin{enumerate}[1.]
\item $\alpha$ is $3$-independent and
\item the compatibility constants along the normal edges $N_0$ are all equal to 1; i.e. $\lambda_{e}(e')=1$ for $e\in E_0$ and $e'\in N_0^{i(e)}$.
\end{enumerate}    
Note that if condition 2 holds then $(\Gamma_0,\alpha_0,\theta_0,\lambda_0)$ is level and if condition 1 holds then the function $\alpha^\sharp$ is actually $2$-independent.  
\end{remark*}

\subsection{Reduction}
\subsubsection{$2$-faces and reducible 1-skeleta}
For $3$-independent non-cyclic 1-skeleta, every $2$-slice is a convex polygon and, in \cite{GZ1}, these polygons are used to define the edges of cross sections.  Unfortunately without the $3$-independence condition some of these polygons may not show up as $2$-slices.  Fortunately we can recover these ``hidden'' polygons using the connection and a polarization.

\begin{definition}
\label{def:faces}  
A \emph{$k$-face} of a (polarized) generalized 1-skeleton $(\Gamma,\alpha,\theta,\lambda)$ is a $k$-valent subskeleton $(\Gamma_0,\alpha_0,\theta_0,\lambda_0)$ with $b_0(\Gamma_0,\alpha_0)=1$.  
\end{definition}

For example the $2$-faces of a $3$-independent noncyclic 1-skeleton are exactly its $2$-valent $2$-slices. 

\begin{definition}
\label{def:enoughfaces}
We say that a (polarized) generalized 1-skeleton $(\Gamma,\alpha,\theta,\lambda)$ has \emph{enough $k$-faces} if for each vertex $p\in V_\Gamma$ and any subset of $k$ edges $\{e_1,\ldots,e_k\}\in E^p$, there is a unique $k$-face containing $\{e_1,\ldots, e_k\}$.
\end{definition}

The 1-skeleton shown in Fig.~\ref{fig:NotReducible} equipped with the connection making the outer hexagon a $2$-face, does not have enough $2$-faces.  On the other hand the 1-skeleton of a simple polytope always has enough $k$-faces.  
\begin{figure}
%\begin{center}
\includegraphics{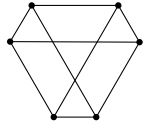}
%\end{center}
\caption{Not Enough $2$-Faces}
\label{fig:NotReducible}
\end{figure}

\begin{definition}
\label{def:reducible}
A 1-skeleton $(\Gamma,\alpha,\theta)\subset\R^n$ is called \emph{reducible} if  
\begin{enumerate}[{R}1.]
\item it admits a polarization and  
\item it has enough $2$-faces.
\end{enumerate}
\end{definition} 

In the $3$-independent case the notion of noncyclic and reducible coincide.  We will see presently that the reduction construction from \cite{GZ1} applies to reducible 1-skeleta, and yields cross sections that are generalized 1-skeleta.    

\subsubsection{Cross sections}
For what follows it will be useful to keep track of the compatibility system of a 1-skeleton, so we shall include this in the notation.
Fix $(\Gamma,\alpha,\theta,\lambda)$ a $d$-valent reducible 1-skeleton in $\R^n$ with generic polarizing covector $\xi\in\left(\R^n\right)^*$.  Fix a $\xi$-compatible Morse function $\phi\colon V_\Gamma\rightarrow\R$ and a $\phi$-regular value $c\in\R$.  

A $2$-face $Q$ is a loop, which comes with two distinct orientations:\\
For a fixed basepoint $p_0\in Q$ we have ${\dsp \begin{cases}
Q\coloneqq\{p_0\sra p_1\sra\cdots\sra p_N\sra p_0\}\\
\bar{Q}\coloneqq\{p_0\sra p_N\sra\cdots\sra p_1\sra p_0\}.\\
\end{cases}}$  
An \emph{oriented} $2$-face is a $2$-face with a fixed orientation.  Let $\mathcal{F}_2$ denote the set of oriented $2$-faces of $(\Gamma,\alpha,\theta,\lambda)$.  Each oriented $2$-face $Q$ comes with a $\xi$-maximum vertex and a $\xi$-minimum vertex: 
$$M_\xi(Q)=\max^{}_{v\in Q}(\phi(v)) \ \ \ \ \text{and} \ \ \ \ \ m_\xi(Q)=\min^{}_{v\in Q}(\phi(v)).$$

Define the $c$-vertex set $V_c$ to be the oriented edges of $\Gamma$ at \emph{$c$-level}: $V_c=\left\{pq\in E_\Gamma\left|\right.\phi(p)<c<\phi(q)\right\}.$
Define the $c$-oriented edge set $E_c$ to be the oriented $2$-faces of $\Gamma$ at $c$-level:  $E_c=\left\{Q\in\mathcal{F}_2\left|\right. m_\xi(Q) \ < \ c \ < \ M_\xi(Q)\right\}.$
The condition $b_0(Q)=1$ implies that exactly two oriented edges in $Q$ lie at $c$-level.  Moreover exactly one of these oriented edges is a directed edge (via $\xi$) compatible  with the orientation on $Q$; this directed edge is the initial $c$-vertex of the oriented $c$-edge $Q$.  Hence the set $E_c$ consists of ordered pairs of $c$-vertices, and the pair $(V_c,E_c)$ defines a $(d-1)$-valent graph denoted by $\Gamma_c$.

The normal path-connection maps are used to define a connection on $\Gamma_c$.  In fact there are two natural connections on $\Gamma_c$ corresponding to the two paths around $Q$ joining its ``$c$-vertices''.

Fix $Q\in E_c$ and suppose $i(Q)=pq$ and $t(Q)=vw$ as in Fig.~\ref{fig:crossill}.  Define the \emph{upper path} from $q$ to $w$ 
\begin{equation}
\label{eq:uppath}
\gamma_{Q}^u\colon q=r_1\shortrightarrow r_2\shortrightarrow\ldots\shortrightarrow r_{k-1}\shortrightarrow r_k=w,
\end{equation}
meaning that $\phi(r_i)>c$ for $1\leq i\leq k$.  Analogously define the \emph{lower path} from $p$ to $v$ 
\begin{equation}
\label{eq:downpath}
\gamma_{Q}^d\colon p=t_1\shortrightarrow t_2\shortrightarrow\ldots\shortrightarrow t_{m-1}\shortrightarrow t_m=v,
\end{equation} 
meaning that $\phi(t_j)<c$ for $1\leq j\leq m$.

Since $(\Gamma,\alpha,\theta,\lambda)$ has enough $2$-faces, the oriented edges normal to $Q$, $e\in N^q_Q$, are in one-to-one correspondence with oriented $c$-edges distinct from $Q$, $R_e\in\left(E_c\right)_{pq}\setminus\left\{Q\right\}$ .  Thus the normal path-connection maps for the upper path (resp. lower path) define the \emph{up connection} (resp. \emph{down connection}) maps for $\Gamma_c$:
\begin{equation}
\label{eq:upconnection}
\xymap{(E_c)_{pq}\setminus\{Q\}\ar[r]^-{(\theta_c^u)_Q}\ar[d]_-\cong & (E_c)_{vw}\setminus\{\bar{Q}\}\ar[d]^-\cong\\
N_0^q\ar[r]_-{K^\perp_{\gamma_{Q}^u}} & N_0^w.\\}
\end{equation}
The down connection maps are defined analogously, replacing $K^\perp_{\gamma_Q^u}$ with ${\dsp K^\perp_{\gamma_Q^d}\colon N^p_0\rightarrow N^v_0.}$  Clearly the maps $\left(\theta_c^u\right)_Q$ (resp. $\left(\theta_c^d\right)_Q$) define a connection $\theta_c^u$ (resp. $\theta_c^d$) on $\Gamma_c$ called the \emph{up connection} (resp. \emph{down connection}).  See Fig.~\ref{fig:crossill}.

\begin{figure*}
  \includegraphics[width=0.75\textwidth]{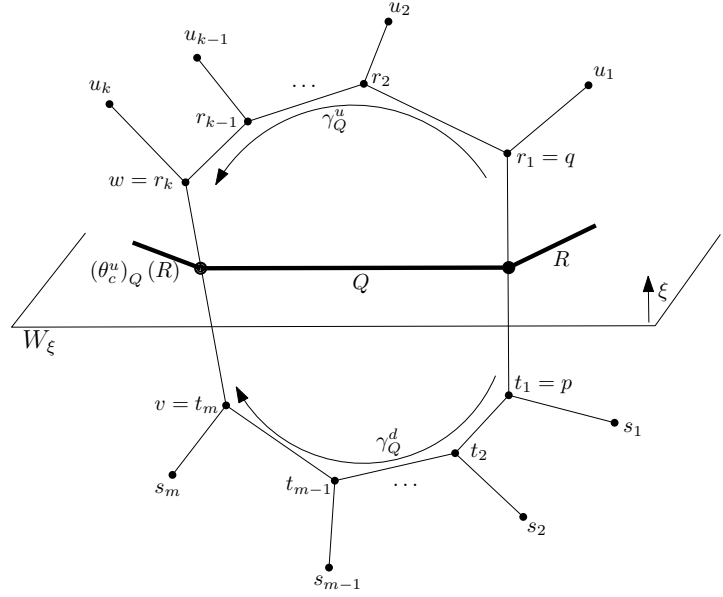}
\caption{$c$-Cross Section}
\label{fig:crossill}       % Give a unique label
\end{figure*}

Just as we used path-connection maps to define the connection on $\Gamma_c$, we use the path-connection numbers to define a compatibility system on $\Gamma_c$.
Define the up compatibility maps by:
\begin{equation}
\label{eq:upcompatibility}
\xymap{(E_c)_{pq}\setminus\{Q\}\ar[dd]_-\cong\ar[dr]^-{(\lambda_c^u)_Q} & \\
& \R_+\\
N_0^q\ar[ur]_-{\left|K_{\gamma_{Q}^u}(-)\right|} & \\}
\end{equation}
where the lower map is defined by $e\mapsto |K_{\gamma_{Q}^u}(e)|$ as in Definition \ref{def:pathconnectionnumbers}.
Define the down compatibility maps analogously, replacing $\left|K_{\gamma_Q^u}(-)\right|$ with $\left|K_{\gamma_Q^d}(-)\right|\colon N^p_0\rightarrow \R^+.$ 

Observe that for every oriented $c$-edge $Q\in E_c$ and every $R_e\in\left(E_c\right)_{pq}$ we have  
$$(\lambda_c^u)_{\bar{Q}}\circ(\theta_c^u)_Q\left(R_e\right)=
\left|K_{\bar{\gamma}_{Q}^u}(K_{\gamma_{Q}^u}(e))\right|=\frac{1}{\left|K_{\gamma_{Q}^u}(e)\right|}=
\frac{1}{(\lambda_c^u)_Q\left(R_e\right)},$$
hence $\lambda_c^u$ defines a compatiblity system for the pair $(\Gamma_c,\theta_c)$, and similarly for $\lambda_c^d$.  

Therefore we have two (possibly distinct) pre 1-skeleta with the same underlying graph $\Gamma_c$, namely
${\dsp(\Gamma_c,\theta_c^u,\lambda_c^u)}$ 
and
${\dsp(\Gamma_c,\theta_c^d,\lambda_c^d).}$

For each pre-1-skeleton defined above, we can define a compatible, generalized axial function on $\Gamma_c$ as follows.  Let $W_\xi\subset\R^n$ denote the subspace annihilated by $\xi$.  Denote by $\textstyle{\bigwedge}^2\R^n$ the vector space of alternating two tensors generated by elements of the form $x\wedge y(=-y\wedge x)$ for $x,y\in\R^n$.  Let $\iota\colon\textstyle{\bigwedge}^2\R^n\rightarrow\R^n$ denote the $\xi$-interior product map defined by $\iota(x\wedge y)=\langle\xi,x\rangle y-\langle\xi,y\rangle x$.  As above let $Q\in E_c$ be an oriented $c$-edge with $i(Q)=pq$ and $t(Q)=vw$.  Let $$\gamma_j^u\colon q=r_1\shortrightarrow r_2\shortrightarrow\ldots\shortrightarrow r_j$$
be the partial upper path in $Q$ from $q$ to $r_j$ and
$$\gamma_j^d\colon p=t_1\shortrightarrow t_2\shortrightarrow\ldots\shortrightarrow t_j$$ 
the partial lower path in $Q$ from $p$ to $t_j$.  Then $\gamma^u_k=\gamma_{Q}^u\colon q\sra\cdots\sra r_k=w$ and $\gamma_m^d=\gamma_{Q}^d\colon p\sra\cdots\sra t_m=v$ 
as above.  Our convention will be to let $p=r_0$ and $v=r_{k+1}$ and to let $q=t_0$ and $w=t_{m+1}$.

Define the function ${\dsp\alpha_c^u\colon E_c\rightarrow W_\xi}$ by 
\begin{equation}
\label{eq:upaxial}
\alpha_c^u(Q)=\frac{\iota(\alpha(r_1r_0)\wedge\alpha(r_1r_2))}{\langle\xi,\alpha(r_1r_0)\rangle}.
\end{equation}
Similarly define the function ${\dsp\alpha_c^d\colon E_c\rightarrow W_\xi}$ by
\begin{equation}
\label{eq:downaxial}
\alpha_c^d(Q)=\frac{\iota(\alpha(t_1t_0)\wedge\alpha(t_1t_2))}{\langle\xi,\alpha(t_1t_0)\rangle}.
\end{equation}
An elegant argument due to Guillemin and Zara \cite[Theorem 2.3.1]{GZ1} applies verbatim to show that the function $\alpha_c^u$ (resp.~$\alpha_c^d$) defines a generalized axial function for the pre 1-skeleton $(\Gamma_c,\theta_c^u,\lambda_c^u)$ (resp.~$(\Gamma_c,\theta_c^d,\lambda_c^d)$).  We refer the reader to \cite{GZ1} for the details.

Thus we get two (possibly distinct) generalized 1-skeleta structures on the ($d-1$)-valent graph $\Gamma_c$; the \emph{up $c$-cross-section} of $\Gamma$, $(\Gamma_c,\alpha_c^u,\theta_c^u,\lambda_c^u)$, and the \emph{down $c$-cross-section} of $\Gamma$, $(\Gamma_c,\alpha_c^d,\theta_c^d,\lambda_c^d)$.  

An important component to the reduction technique is understanding what happens to cross sections as they pass over critical values.  A beautiful description in the noncyclic case involving a blow-up construction was given in \cite{GZ1}; it turns out such a description is also valid in the reducible case.

\subsubsection{Passage over a critical value}
Let $(\Gamma,\alpha,\theta,\lambda)$ be a $d$-valent reducible 1-skeleton in $\R^n$.  Fix a polarizing covector $\xi\in(\R^n)^*$ and a  $\xi$-compatible Morse function $\phi$.  Fix two $\phi$-regular values $c \ < \ c^\prime$ such that there is a unique vertex $p\in V_\Gamma$ satisfying $c<\phi(p)<c'$.  

Suppose that $\operatorname{ind}_\xi(p)=r$ and let 
$$V_{c,0}\coloneqq\left\{p_ip\left|\right. 1\leq i\leq r\right\}$$ 
denote those edges flowing into $p$, i.e. $\langle\xi,\alpha(p_ip)\rangle>0$ for $1\leq i\leq r$.  Let 
$$V_{c',0}\coloneqq\left\{pq_a\left|\right. 1\leq a\leq d-r\right\}$$
denote the oriented edges flowing out of $p$, i.e. $\langle\xi,\alpha(pq_a)\rangle \ > \ 0$ for $1\leq a\leq d-r$.  

Consider the up $c$-cross section $(\Gamma_c,\alpha_c^u,\theta_c^u,\lambda_c^u)$.  The set of oriented edges 
$V_{c,0}\subset V_c$ 
is the $c$-vertex set of a totally geodesic complete subgraph $\Gamma_{c,0}\subset\Gamma_c$.  Denote by $Q_{ia}$ the oriented $2$-face spanned by oriented edges $pp_i$ and $pq_a$ with initial $c$-vertex $pp_i$.  The set of oriented $c$-edges normal to $\Gamma_{c,0}$ is denoted by $N_{c,0}\coloneqq\{Q_{ia}\mid 1\leq a\leq (d-r) \ \ 1\leq i\leq r\}.$
The function
\begin{equation}
\label{eq:upblnumb}
\xymap{N_{c,0}\ar[r]^-{n^u} & \R_+\\
Q_{ia}\ar@{|->}[r]& \langle\xi,\alpha(pq_a)\rangle\\}
\end{equation}
defines a blow-up system for $(\Gamma_c^u,\alpha_c^u,\theta_c^u,\lambda_c^u)$ along $(\Gamma_{c,0}^u,\alpha_{c,0}^u,\theta_{c,0}^u,\lambda_{c,0}^u).$  Indeed for all $i,j, \ \text{and} \ a$ we have
\begin{equation}
\label{eq:blupcon}
\frac{n^u\left(Q_{ia}\right)}{n^u\left(\left(\theta^u_c\right)_{Q_{ij}}\left(Q_{ia}\right)\right)}= \frac{n^u\left(Q_{ia}\right)}{n^u\left(Q_{ja}\right)}= 1=\left(\lambda_c^u\right)_{Q_{ij}}\left(Q_{ia}\right).
\end{equation}
We can therefore blow-up along the subskeleton to get the generalized 1-skeleton 
${\dsp(\Gamma_c^\sharp,(\alpha_c^u)^\sharp,(\theta_c^u)^\sharp,(\lambda_c^u)^\sharp).}$

Similarly we can blow-up the down $c^\prime$-cross section $(\Gamma_{c'},\alpha_{c'}^d,\theta_{c'}^d,\lambda_{c'}^d)$ along the $(d-r)$-valent totally geodesic subskeleton $(\Gamma_{c',0},\alpha_{c',0}^d,\theta_{c',0}^d,\lambda_{c',0}^d)$.  Here the $c'$-vertex set of $\Gamma_{c',0}$ is the oriented edge set $V_{c',0}$ and the oriented $c'$-edges normal to $\Gamma_{c',0}$ are the oriented $2$-faces spanned by oriented edges $pp_i$ and $pq_a$, denoted by $Q_{ai}$ with $i(Q_{ai})=pq_a$.  In this case the function
\begin{equation}
\label{eq:downblnumb}
\xymap{N_{c',0}\ar[r]^-{n^d} & \R_+\\
Q_{ai}\ar@{|->}[r] & \langle\xi,\alpha(p_ip)\rangle\\}
\end{equation}
forms a blow-up system for $(\Gamma_{c'},\alpha_{c'}^d,\theta_{c'}^d,\lambda_{c'}^d)$ along the subskeleton $(\Gamma_{c',0},\alpha_{c',0}^d,\theta_{c',0}^d,\lambda_{c',0}^d)$ for similar reasons as above:  for all $a, \ b, \ \text{and} \ i$ we have
\begin{equation}
\label{eq:blupcon}
\frac{n^d\left(Q_{ai}\right)}{n^u\left(\left(\theta^d_{c'}\right)_{Q_{ab}}\left(Q_{ai}\right)\right)}= \frac{n^d\left(Q_{ai}\right)}{n^d\left(Q_{bi}\right)}= 1=\left(\lambda_c^d\right)_{Q_{ab}}\left(Q_{ai}\right).
\end{equation}
We can therefore blow-up along the subskeleton to get the generalized 1-skeleton 
${\dsp(\Gamma_{c'}^\sharp,(\alpha_{c'}^d)^\sharp,(\theta_{c'}^d)^\sharp,(\lambda_{c'}^d)^\sharp).}$
See Figure~\ref{fig:critpt2}.

\begin{figure*}
  \includegraphics[width=0.75\textwidth]{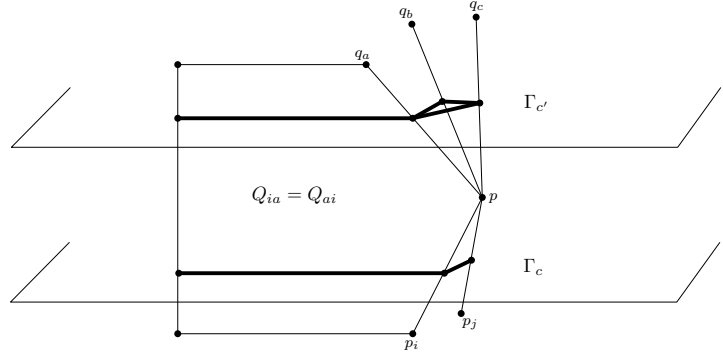}
\caption{Passage Over a Critical Point}
\label{fig:critpt2}       % Give a unique label
\end{figure*}

In \cite{GZ1} Guillemin and Zara prove that for $3$-independent $\alpha$, the pairs $(\Gamma_c^\sharp,\left(\alpha_c^u\right)^\sharp)$, $(\Gamma_{c'}^\sharp,\left(\alpha_{c'}^d\right)^\sharp)$ are  ``equivalent'' in the sense that the graphs are isomorphic and the axial functions are positive multiples of one another.  Unfortunately this statement fails to hold without further assumptions when one considers the whole generalized 1-skeleta quadruple including connections and compatibility systems.  The following lemma is an analogue of Theorem 2.3.2 in \cite{GZ1}.

\begin{lemma}
\label{lem:equivblowup}
Assume that all $2$-faces in $E_c\cup E_{c'}$ are level and have trivial normal holonomy.  Then we have the following equivalences of 1-skeleta.
\begin{enumerate}[(i)]
\item ${\dsp (\Gamma_c,\alpha_c^u,\theta_c^u,\lambda_c^u)\equiv(\Gamma_c,\alpha_c^d,\theta_c^d,\lambda^d_c)}$.   
\item $(\Gamma_c^\sharp,(\alpha_c^u)^\sharp,(\theta_c^u)^\sharp,(\lambda_c^u)^\sharp) \equiv (\Gamma_{c'}^\sharp,(\alpha_{c'}^d)^\sharp,(\theta_{c'}^d)^\sharp,(\lambda_{c'}^d)^\sharp)$.
\end{enumerate}
\end{lemma}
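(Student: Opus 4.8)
The plan is to prove (i) first and then use it together with the explicit blow-up formulas from Subsection 3.2 to deduce (ii). For part (i), I want to produce a function $\kappaup\colon E_c\to\R_+$ satisfying the two conditions of Definition \ref{def:eqgenaxial} that relates $(\Gamma_c,\alpha_c^u,\theta_c^u,\lambda_c^u)$ to $(\Gamma_c,\alpha_c^d,\theta_c^d,\lambda_c^d)$. The key observation is that for an oriented $c$-edge $Q$ with $i(Q)=pq$ and $t(Q)=vw$, the vectors $\alpha_c^u(Q)$ and $\alpha_c^d(Q)$, defined by Eqs. \eqref{eq:upaxial} and \eqref{eq:downaxial} as $\xi$-interior products taken at the top and bottom of $Q$ respectively, both lie in the same $1$-dimensional subspace of $W_\xi$: indeed, going around the $2$-face $Q$ via the upper path, repeated application of gA2 in Definition \ref{def:generalaxial} shows that the planes spanned by consecutive pairs of edges along $Q$ all project (under $\iota$, after dividing by the appropriate $\xi$-pairings) to the same line in $W_\xi$, exactly as in \cite[Theorem 2.3.1]{GZ1}. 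So $\alpha_c^u(Q)=\kappaup(Q)\cdot\alpha_c^d(Q)$ for a unique $\kappaup(Q)>0$; this defines the required function on $E_c$ (note $E_c$ is the \emph{oriented} edge set of $\Gamma_c$, so $\kappaup$ is a function on oriented edges, as Definition \ref{def:eqgenaxial} requires). That the connections agree, $\theta_c^u=\theta_c^d$, is precisely the hypothesis of trivial normal holonomy: the two maps $(\theta_c^u)_Q$ and $(\theta_c^d)_Q$ are, via the vertical identifications in \eqref{eq:upconnection}, the normal path-connection maps $K^\perp_{\gamma_Q^u}$ and $K^\perp_{\gamma_Q^d}$; since $\gamma_Q^u$ followed by the reverse of $\gamma_Q^d$ (closed up through the top and bottom edges of $Q$) is a loop in the $2$-face $Q$, triviality of normal holonomy on $Q$ forces $K^\perp_{\gamma_Q^u}=K^\perp_{\gamma_Q^d}$. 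It then remains to check condition (2) of Definition \ref{def:eqgenaxial}, namely $(\lambda_c^u)_Q(R_e)=\frac{\kappaup(R_e)}{\kappaup((\theta_c^u)_Q(R_e))}\,(\lambda_c^d)_Q(R_e)$; here I expand both sides using \eqref{eq:upcompatibility}, its down-analogue, and the definition of $\kappaup$, and the identity reduces — after comparing the $\xi$-interior-product expressions for $\alpha_c^u$ and $\alpha_c^d$ at the endpoints $pq$ and $vw$ of $Q$ — to the statement that the normal holonomy \emph{number} around the loop formed by $\gamma_Q^u$ and $\gamma_Q^d$ equals $1$, which is the levelness hypothesis on $Q$ (Definition \ref{def:straightlevel}).

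For part (ii), the point is that both blow-ups are taken along subskeleta sitting over the single critical vertex $p$, and I want to exhibit an isomorphism of graphs $\Gamma_c^\sharp\cong\Gamma_{c'}^\sharp$ under which the two generalized axial functions, connections, and compatibility systems correspond up to the equivalence of Definition \ref{def:eqgenaxial}. The graph isomorphism is the ``identity away from $p$'' together with the bijection $z^{pp_i}_{Q_{ia}}\leftrightarrow z^{pq_a}_{Q_{ai}}$ between the two singular loci, which is exactly the combinatorial heart of \cite[Theorem 2.3.2]{GZ1}; since $\Gamma_c$ and $\Gamma_{c'}$ differ only in the local picture at $p$ and agree elsewhere (the cross-sections only change when crossing $p$, and no other critical value lies in $(c,c')$), this is a well-defined isomorphism. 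Granting (i), the axial functions $\alpha_c^u$ and $\alpha_{c'}^d$ already agree up to positive scalar on the common ``non-singular'' part of the graph (using that $c$ and $c'$ straddle only $p$, and the down-$c'$-cross-section restricted away from the new edges is the down-$c$-cross-section, which by (i) is equivalent to the up-$c$-cross-section). On the singular locus, I feed the explicit blow-up data: the blow-up systems $n^u(Q_{ia})=\langle\xi,\alpha(pq_a)\rangle$ and $n^d(Q_{ai})=\langle\xi,\alpha(p_ip)\rangle$ from \eqref{eq:upblnumb} and \eqref{eq:downblnumb}, together with Table \ref{tab:bualphavalues}, Table \ref{tab:buthetavalues}, and Table \ref{tab:bulambdavalues}. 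Plugging these in, the horizontal edges $z^p_ez^p_{e'}$ get axial value $\alpha(e')-\tfrac{n(e')}{n(e)}\alpha(e)$ on each side; a direct comparison — using $\alpha(p_ip)=-m\,\alpha(pp_i)$ from gA1 and the definitions of $\alpha_c^u$, $\alpha_{c'}^d$ on the relevant $2$-faces $Q_{ia}$ — shows these match up to the scalar function $\kappaup^\sharp$ built from $\kappaup$ of part (i) and the ratios of the $n$'s, and similarly the vertical edges $z^p_e[t(e)]$ carry $\tfrac{1}{n(e)}\alpha(e)$, which matches after absorbing the same ratio. The $\theta^\sharp$ and $\lambda^\sharp$ values off and on the singular locus are then forced to correspond by Eqs. \eqref{eq:thetanotlocus}, \eqref{eq:lambdanotlocus} and the entries of Tables \ref{tab:buthetavalues}, \ref{tab:bulambdavalues}, again using (i) and the blow-up compatibility \eqref{eq:E.6}.

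I expect the main obstacle to be the bookkeeping in part (i)'s condition (2): tracking how the ratio $\kappaup(R_e)/\kappaup((\theta_c^u)_Q(R_e))$ — a ratio of the two scalars comparing $\alpha_c^u$ and $\alpha_c^d$ at the two $c$-vertices of $Q$ — interacts with $(\lambda_c^u)_Q$ and $(\lambda_c^d)_Q$, which are the upper and lower path-connection \emph{numbers}. The clean way to organize this is to fix, for each $2$-face $Q$, a covector $\eta_Q\in(\R^n)^*$ vanishing on the top and bottom edges of $Q$ (possible since $Q$ is a $2$-face, so its ``slice'' spans a $2$-plane, at least after using gA2 to see that the whole loop $Q$ lies in a single $2$-plane modulo the kernel structure), and to express every $\lambda$ along $Q$ as a ratio $\langle\eta_Q,\alpha(e)\rangle/\langle\eta_Q,\alpha(\theta(e))\rangle$ as in Eq. \eqref{eq:compatibilityfan}; levelness of $Q$ is exactly what guarantees such an $\eta_Q$ is consistent around the whole loop. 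With that device the telescoping collapses the products and the identity becomes transparent. The rest — the graph isomorphism in (ii) and the table-chasing — is routine once the blow-up systems \eqref{eq:upblnumb}, \eqref{eq:downblnumb} are in hand, and mirrors \cite[Theorem 2.3.2]{GZ1} closely enough that I would cite that argument for the combinatorial skeleton and only spell out the new scalar-tracking.
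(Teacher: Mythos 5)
Your overall architecture is the same as the paper's: for (i), show $\theta_c^u=\theta_c^d$ from trivial normal holonomy of $Q$, produce $\kappaup(Q)$ relating $\alpha_c^u(Q)$ and $\alpha_c^d(Q)$ from the A3 relation across the bottom edge of $Q$ (the paper gets $\kappaup(Q)=\lambda_{qp}(qr_2)$ in one step), and reduce the compatibility-system identity to the levelness product identity for the full loop around $Q$; for (ii), use the identification $Z_{ia}\leftrightarrow Z_{ai}$ of singular loci, invoke (i) off the singular locus, and chase the tables with the blow-up systems \eqref{eq:upblnumb} and \eqref{eq:downblnumb}. All of that matches.

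The one step that would fail is the device you propose for the ``main obstacle'': fixing a covector $\eta_Q$ vanishing on the $2$-plane $H_Q$ of $Q$ and writing every compatibility constant along $Q$ as $\langle\eta_Q,\alpha(e')\rangle/\langle\eta_Q,\alpha(\theta_e(e'))\rangle$. Applying $\eta_Q$ to the A3 relation only determines $\lambda_e(e')$ when $\langle\eta_Q,\alpha(\theta_e(e'))\rangle\neq 0$; if a normal edge direction $\alpha(e')$ happens to lie in $H_Q$ — which cannot occur under $3$-independence but is exactly the degeneracy the reducible class is designed to admit — the ratio is $0/0$ for \emph{every} covector annihilating $H_Q$, and no consistent $\eta_Q$ exists. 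So the telescoping you describe is not available in general. Fortunately you do not need it: as you yourself note earlier, the identity you must verify is precisely the statement $\bigl|K_\gamma(e)\bigr|=1$ for the loop $\gamma$ around $Q$, decomposed into the contributions of $\gamma_Q^u$, the edge $wv$, the reverse of $\gamma_Q^d$, and the edge $pq$; this is the levelness hypothesis used directly as a product identity (together with $\lambda_{\bar e}\circ\theta_e=1/\lambda_e$), which is how the paper argues. Replace the $\eta_Q$ crutch with that direct unwinding and the proof goes through.
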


\begin{proof}
First we establish (i).
Fix oriented $2$-faces $Q$ and $R$ as shown in Fig.~\ref{fig:crossill}, let $\gamma_Q^u$ (resp. $\gamma_Q^d$) be as in Eq. \eqref{eq:uppath} (resp. \eqref{eq:downpath}).  Identify the oriented $c$-edges at $i(Q)=pq$ with the oriented edges normal to $Q$ at $p$ (resp. $q$), i.e. ${\dsp\left(E_c\right)^{pq}\setminus\{Q\}\cong N^p_Q \left(\text{resp.} N^q_Q\right)}$.  Similarly identify the oriented $c$-edges at $t(Q)=vw$ with the oriented edges normal to $Q$ at $v$ (resp. $w$), i.e.  ${\dsp\left(E_c\right)_{vw}\setminus\{\bar{Q}\}\cong N^v_Q \left(\text{resp.} N^w_Q\right)}$.  We have the following diagram:
\[\xymatrixcolsep{5pc}\xymatrixrowsep{3pc}\xymatrix{(E_c)_{\ol{pq}}\setminus\{Q\}\ar[r]^-{(\theta_c^u)_Q}\ar[d]_-\cong & (E_c)_{vw}\setminus\{\bar{Q}\}\ar[d]_-\cong\\
N_Q^q\ar[r]^-{K^\perp_{\gamma_Q^u}}\ar[d]_-{\theta^\perp_{qp}} & N_Q^w\ar[d]^-{\theta^\perp_{vw}}\\
N_Q^p\ar[r]^-{K^\perp_{\gamma_Q^d}}\ar[d]_-\cong & N_Q^v\ar[d]^-\cong\\
(E_c)_{pq}\setminus\{Q\}\ar[r]^-{(\theta_c^d)_Q} & (E_c)_{vw}\setminus\{\bar{Q}\}.\\}\]
The vertical maps compose to the identity maps on ${\dsp\left(E_c\right)_{pq}\setminus\{Q\}}$ and ${\dsp\left(E_c\right)_{vw}\setminus\{Q\}}$.  The top and bottom squares commute by definition, i.e. Eq. \eqref{eq:upconnection}, and the commutativity of the middle square follows from the trivial normal holonomy of $Q$.  Thus the outer rectangle commutes, which implies the equivalence of $\theta_c^u$ and $\theta_{c}^d$.  

Using Eqs. \eqref{eq:upaxial} and \eqref{eq:downaxial}, and that 
$$\alpha(qr_2)-\lambda_{qp}(qr_2)\alpha(pt_2)=c\alpha(qp)=-c\alpha(pq),$$
we see 
$$\alpha_c^u(Q)=\kappaup(Q)\cdot\alpha_{c'}^d(Q) \ \ \ \text{where} \ \ \ \kappaup(Q)\coloneqq\lambda_{qp}(qr_2).$$
The trivial normal holonomy of $Q$ yields 
$$K^\perp_{\gamma_Q^d}\circ\theta^\perp_{qp}=\theta^\perp_{wv}\circ K^\perp_{\gamma_Q^u},$$
which implies that
$$\theta^d_c=\theta^u_c.$$
Let $R\in(E_c)_{{pq}}\setminus\{Q\}$ be an oriented $c$-edge at $i(Q)=pq$ corresponding to oriented edges ${qu_1}\in N^0_q$ and ${ps_1}\in N^0_p$.  Then the levelness of $Q$ implies
$$\left|K^\perp_{\gamma_Q^d}(ps_1)\right|\cdot\lambda_{qp}(qu_1)=\lambda_{wv}\left(K^\perp_{\gamma^u_Q}(qu_1)\right)\cdot\left|K^\perp_{\gamma_Q^u}(qu_1)\right|,$$ 
from which it follows that
$$\left(\lambda_c^d\right)_Q(R)\cdot\frac{\kappaup(R)}{\kappaup\left(\left(\theta_c^u\right)_Q(R)\right)}=\left(\lambda_c^u\right)_Q(R).$$

Now we establish (ii).  There is a natural identification of the vertices which extends to an identification of graphs $\Gamma_c^\sharp$ and $\Gamma_{c'}^\sharp$:
\begin{equation}
\label{eq:updownid}
\xymap{V_c\setminus V_{c,0}\ar@{=}[r] & V_{c'}\setminus V_{c',0}\\
\sqcup & \sqcup\\
\left\{Z_{ia}\right\}\ar[r] & \left\{Z_{ai}\right\}.\\}
\end{equation}
Note that $c$-vertices outside the singular locus of $\Gamma_c^\sharp$ coincide with the $c'$-vertices outside the singular locus of $\Gamma_{c'}^\sharp$.  Indeed oriented edges not containing $p$ are at $c$-level if and only if they are at $c'$-level.  Also note that Eq. \eqref{eq:updownid} identifies horizontal edges of $\Gamma_{c,0}^\sharp$ with vertical edges of $\Gamma_{c',0}^\sharp$ and vice versa.  In order to establish (ii), we need to show 
\begin{enumerate}[(a)]
\item ${\dsp \left(\theta_c^u\right)^\sharp_\epsilon=\left(\theta_{c'}^d\right)^\sharp_\epsilon}$
{}\\
\item ${\dsp \left(\lambda_c^u\right)^\sharp_\epsilon=\frac{\kappaup}{\kappaup\circ\left(\theta_c^u\right)^\sharp_\epsilon}\left(\lambda_{c'}^d\right)^\sharp_\epsilon}$
{}\\
\item ${\dsp \left(\alpha_c^u\right)^\sharp=\kappaup\left(\alpha^d_{c'}\right)^\sharp}$.
\end{enumerate}
Note that for oriented edges not issuing from the singular locus of either $\Gamma_c^\sharp$ or $\Gamma_{c'}^\sharp$, the blow-up connection maps, compatibility systems, and generalized axial functions coincide with their counter parts on $\Gamma_c$ or $\Gamma_{c'}$, cf. Eqs. \eqref{eq:thetanotlocus}, \eqref{eq:lambdanotlocus}, and \eqref{eq:alphanotlocus}.
Therefore by (i), it suffices to establish (a), (b), and (c) for oriented edges $\epsilon$ issuing from the singular locus of $\Gamma_c^\sharp \cong \Gamma_{c'}^\sharp$.

A direct comparison of values in Table \ref{tab:buequivthetavalues} shows that $\left(\theta_c^u\right)^\sharp_\epsilon=\left(\theta_{c'}^d\right)^\sharp_\epsilon$, hence (a) holds.

\begin{table}
% table caption is above the table
\caption{$\left(\theta^u_{c^{}}\right)^\sharp=\left(\theta^d_{c^{\prime}}\right)^\sharp$}
\label{tab:buequivthetavalues}  % Give a unique label
% For LaTeX tables use
\begin{tabular}{llll}
\hline\noalign{\smallskip}
$\epsilon$ & $\left(\theta^u_{c^{}}\right)^\sharp_\epsilon\left(Z_{ia}[t(Q_{ia})]\right)$ & $\left(\theta^u_{c^{}}\right)^\sharp_\epsilon\left(Z_{ia}Z_{ja}\right)$ & $\left(\theta^u_{c^{}}\right)^\sharp_\epsilon\left(Z_{ia}Z_{ib}\right)$ \\
\noalign{\smallskip}\hline\noalign{\smallskip}
$Z_{ia}[t(Q_{ia})]$ & --------- & $\left(\theta_c^u\right)_{Q_{ia}}\left(Q_{ik}\right)$ & $\left(\theta_c^u\right)_{Q_{ia}}\left(Q_{ic}\right)$ \\
\noalign{\smallskip}
$Z_{ia}Z_{ja}$ & $Z_{ja}[t(Q_{ja})]$ & $Z_{ja}Z_{ka}$ & $Z_{ja}Z_{jc}$\\ 
\noalign{\smallskip}
$Z_{ia}Z_{ib}$ & $Z_{ib}[t\left(Q_{ib}\right)]$ & $Z_{ib}Z_{kb}$ & $Z_{ib}Z_{ic}$\\
\noalign{\smallskip}\hline
\end{tabular}
\begin{tabular}{llll}
\hline\noalign{\smallskip}
$\epsilon$ & $\left(\theta^d_{c^{\prime}}\right)^\sharp_\epsilon\left(Z_{ai}[t(Q_{ai})]\right)$ & $\left(\theta^d_{c^{\prime}}\right)^\sharp_\epsilon\left(Z_{ai}Z_{aj}\right)$ & $\left(\theta^d_{c^{\prime}}\right)^\sharp_\epsilon\left(Z_{ai}Z_{bi}\right)$ \\
\noalign{\smallskip}\hline\noalign{\smallskip}
$Z_{ai}[t(Q_{ai})]$ & --------- & $\left(\theta_{c^\prime}^d\right)_{Q_{ai}}\left(Q_{ak}\right)$ & $\left(\theta_{c^\prime}^d\right)_{Q_{ai}}\left(Q_{ci}\right)$ \\
\noalign{\smallskip}
$Z_{ai}Z_{aj}$ & $Z_{aj}[t(Q_{aj})]$ & $Z_{aj}Z_{ak}$ & $Z_{aj}Z_{cj}$\\ 
\noalign{\smallskip}
$Z_{ai}Z_{bi}$ & $Z_{bi}[t\left(Q_{bi}\right)]$ & $Z_{bi}Z_{bk}$ & $Z_{bi}Z_{ci}$\\
\noalign{\smallskip}\hline
\end{tabular}
\end{table}

Tables \ref{tab:buequivlambdavalues} and \ref{tab:buequivalphavalues} compare the compatibility systems and generalized axial functions on $\Gamma_c^\sharp$ and $\Gamma_{c'}^\sharp$.  In Table~\ref{tab:buequivalphavalues} the $c$-edges $Z_{ia}[t(Q_{ia}]$, $Z_{ia}Z_{ja}$, and $Z_{ia}Z_{ib}$ are implicitly identified with the $c'$-edges $Z_{ai}[t(Q_{ai}]$, $Z_{ai}Z_{aj}$, and $Z_{ai}Z_{bi}$, respectively.
\begin{table}
% table caption is above the table
\caption{$\left(\lambda_c^u\right)^\sharp_{\epsilon}(\epsilon^\prime)\equiv\left(\lambda_{c'}^d\right)^\sharp_\epsilon(\epsilon^\prime)$}
\label{tab:buequivlambdavalues}  % Give a unique label
% For LaTeX tables use
\begin{tabular}{llll}
\hline\noalign{\smallskip}
$\epsilon$ & $\left(\lambda^u_{c^{}}\right)^\sharp_\epsilon\left(Z_{ia}[t(Q_{ia})]\right)$ & $\left(\lambda^u_{c^{}}\right)^\sharp_\epsilon\left(Z_{ia}Z_{ja}\right)$ & $\left(\lambda^u_{c^{}}\right)^\sharp_\epsilon\left(Z_{ia}Z_{ib}\right)$ \\
\noalign{\smallskip}\hline\noalign{\smallskip}
$Z_{ia}[t(Q_{ia})]$ & -- & $\left(\lambda_c^u\right)_{Q_{ia}}\left(Q_{ik}\right)$ & $\left(\lambda_c^u\right)_{Q_{ia}}\left(Q_{ic}\right)$ \\
\noalign{\smallskip}
$Z_{ia}Z_{ja}$ & $1$ & $\left(\lambda_c^u\right)_{Q_{ij}}\left(Q_{ik}\right)$ & $\left(\lambda_c^u\right)_{Q_{ij}}\left(Q_{ic}\right)$\\ 
\noalign{\smallskip}
$Z_{ia}Z_{ib}$ & $1$ & $1$ & $1$\\
\noalign{\smallskip}\hline
\end{tabular}
\begin{tabular}{llll}
\hline\noalign{\smallskip}
$\epsilon$ & $\left(\lambda^d_{c^{\prime}}\right)^\sharp_\epsilon\left(Z_{ai}[t(Q_{ai})]\right)$ & $\left(\lambda^d_{c^{\prime}}\right)^\sharp_\epsilon\left(Z_{ai}Z_{aj}\right)$ & $\left(\lambda^d_{c^{\prime}}\right)^\sharp_\epsilon\left(Z_{ai}Z_{bi}\right)$ \\
\noalign{\smallskip}\hline\noalign{\smallskip}
$Z_{ai}[t(Q_{ai})]$ & -- & $\left(\lambda_{c^\prime}^d\right)_{Q_{ai}}\left(Q_{ak}\right)$ & $\left(\lambda_{c^\prime}^d\right)_{Q_{ai}}\left(Q_{ci}\right)$ \\
\noalign{\smallskip}
$Z_{ai}Z_{aj}$ & $1$ & $1$ & $1$\\ 
\noalign{\smallskip}
$Z_{ai}Z_{bi}$ & $1$ & $\left(\lambda_{c^\prime}^d\right)_{Q_{ab}}\left(Q_{ak}\right)$ & $\left(\lambda_{c^\prime}^d\right)_{Q_{ab}}\left(Q_{ac}\right)$\\
\noalign{\smallskip}\hline
\end{tabular}
\end{table}

\begin{table}
% table caption is above the table
\caption{$\left(\alpha_c^u\right)^\sharp\equiv \left(\alpha_{c'}^d\right)^\sharp$}
\label{tab:buequivalphavalues}       % Give a unique label
% For LaTeX tables use
\begin{tabular}{lll}
\hline\noalign{\smallskip}
$\epsilon$ & $\left(\alpha_c^u\right)^\sharp(\epsilon)$ & $\left(\alpha_{c'}^d\right)^\sharp(\epsilon)$\\
\noalign{\smallskip}\hline\noalign{\smallskip}
$Z_{ia}[t(Q_{ia})]$ & ${\dsp \frac{1}{n^u\left(Q_{ia}\right)}\alpha_c^u\left(Q_{ia}\right)}$ & ${\dsp \frac{1}{n^d\left(Q_{ai}\right)}\alpha_c^u\left(Q_{ai}\right)}$\\
$Z_{ia}Z_{ja}$ & $\alpha_c^u\left(Q_{ij}\right)$ & ${\dsp\alpha_{c'}^d\left(Q_{aj}\right)- \frac{n^d\left(Q_{aj}\right)}{n^d\left(Q_{ai}\right)}\alpha_{c'}^d\left(Q_{ai}\right)}$ \\
$Z_{ia}Z_{ib}$ & ${\dsp \alpha_c^u\left(Q_{ib}\right)-\frac{n\left(Q_{ib}\right)}{n\left(Q_{ia}\right)}\alpha_c^u\left(Q_{ia}\right)}$ & $\alpha_{c'}^d\left(Q_{ab}\right)$\\
\noalign{\smallskip}\hline
\end{tabular}
\end{table}

Recalling that 
${\dsp \begin{cases}
\alpha_c^u\left(Q_{ij}\right)= & \alpha\left(pp_j\right)-\frac{\langle \alpha\left(pp_j\right),\xi\rangle}{\langle\alpha\left(pp_i\right),\xi\rangle}\alpha\left(pp_i\right)\\
{}\\
\alpha_c^u\left(Q_{ia}\right)= & \alpha\left(pq_a\right)-\frac{\langle \alpha\left(pq_a\right),\xi\rangle}{\langle\alpha\left(pp_i\right),\xi\rangle}\alpha\left(pp_i\right)\\
{}\\
\alpha_{c'}^d\left(Q_{ai}\right)= & \alpha\left(pp_i\right)-\frac{\langle\alpha\left(pp_i\right),\xi\rangle}{\langle\alpha\left(pq_a\right),\xi\rangle}\alpha\left(pq_a\right)\\
{}\\
\alpha_{c'}^d\left(Q_{ab}\right)= & \alpha\left(pq_b\right)-\frac{\langle\alpha\left(pq_b\right),\xi\rangle}{\langle\alpha\left(pq_a\right),\xi\rangle}\alpha\left(pq_a\right)\\
\end{cases}}$

and that 
${\dsp \begin{cases}
n^u\left(Q_{ia}\right)= & \langle \alpha\left(pq_a\right),\xi\rangle\\
n^d\left(Q_{ai}\right)= & \langle \alpha\left(p_ip\right),\xi\rangle,\\
\end{cases}}$

it is straight forward to check that 
$$\begin{array}{lccl}\frac{1}{n^u\left(Q_{ia}\right)}\alpha_c^u\left(Q_{ia}\right) & = & 1 & \cdot \left(\frac{1}{n^d\left(Q_{ai}\right)} \alpha_{c'}^d\left(Q_{ai}\right)\right)\\
\alpha_c^u\left(Q_{ij}\right) & = & \frac{1}{\langle\alpha(p_ip),\xi\rangle} &   \cdot\left(\alpha_{c'}^d\left(Q_{aj}\right)-\frac{n^d\left(Q_{aj}\right)}{n^d\left(Q_{ai}\right)}\alpha_{c'}^d\left(Q_{ai}\right)\right)\\
\alpha_{c}^u\left(Q_{ib}\right)-\frac{n^u\left(Q_{ib}\right)}{n^u\left(Q_{ia}\right)}\alpha_{c}^u\left(Q_{ia}\right) & = & \langle\alpha(pq_a),\xi\rangle & \cdot\left(\alpha_{c'}^d\left(Q_{ab}\right)\right).
\end{array}$$

Thus we deduce that the values of $\kappaup(\epsilon)$ that make (c) hold are as shown in Table \ref{tab:kappavalues}.  The verification that (b) also holds with $\kappaup$ defined by Table \ref{tab:kappavalues} is straight forward, and is left to the reader.  This establishes the equivalence in (ii), and thereby completes the proof of Lemma \ref{lem:equivblowup}.

\begin{table}
% table caption is above the table
\caption{$\kappaup$ values}
\label{tab:kappavalues}       % Give a unique label
% For LaTeX tables use
\begin{tabular}{lll}
\hline\noalign{\smallskip}
$\epsilon$ & $\kappaup(\epsilon)$\\
\noalign{\smallskip}\hline\noalign{\smallskip}
$Q$ & $\lambda_{qp}(qr_2)$ \\
$Z_{ia}[t(Q_{ia})]$ & $1$\\
$Z_{ia}Z_{ja}$ & $\frac{1}{\langle\alpha(p_ip),\xi\rangle}$\\
$Z_{ia}Z_{ib}$ & $\langle\alpha(pq_a),\xi\rangle$\\
\noalign{\smallskip}\hline
\end{tabular}
\end{table}
\end{proof}

\subsection{Cutting}
Let $(\Gamma,\alpha,\theta,\lambda)$ be a $d$-valent reducible 1-skeleton in $\R^n$, $\xi$ a polarizing covector in $(\R^n)^*$, and $\phi$ a compatible Morse function.
Let ${\dsp I=\left(\left\{0,1\right\},\left\{01,10\right\}\right)}$ denote the interval graph (i.e. connected single edge graph), and let $\theta_I$
denote the unique connection on $I$.  Then $\lambda_I\equiv 1$ defines a compatibility system on $(I,\theta_I)$ and the function ${\dsp\alpha_I\colon 
\begin{cases}
01\mapsto 1\\
10\mapsto -1\\
\end{cases}}$ defines an axial function, making the quadruple $(I,\alpha_I,\theta_I,\lambda_I)$ a 1-skeleton in $\R$.

The \emph{direct product} 1-skeleton ${\dsp(\hat{\Gamma},\hat{\alpha},\hat{\theta},\hat{\lambda})\subset\R^n\times\R}$ with factors  $(\Gamma,\alpha,\theta,\lambda)\subset\R^n$ and $(I,\alpha_I,\theta_I,\lambda_I)\subset\R$ is a $(d+1)$-valent 1-skeleton defined as follows.  The graph $\hat{\Gamma}$ has vertex set ${\dsp V_{\hat{\Gamma}}\coloneqq V_\Gamma\times 0\sqcup V_\Gamma\times 1}$ and oriented edge set ${\dsp E_{\hat{\Gamma}}\coloneqq E_\Gamma\times V_I \sqcup V_\Gamma\times E_I}$.  The connection maps are defined as the product of the connection maps of the factors:  ${\dsp \hat{\theta}_\epsilon\coloneqq \left(\theta\times\theta_I\right)_\epsilon\colon E_{\hat{\Gamma}}^{i(\epsilon)}\rightarrow E_{\hat{\Gamma}}^{t(\epsilon)}.}$  The compatibility system maps are likewise the product of the compatibility system maps of the factors:  ${\dsp \hat{\lambda}_\epsilon\coloneqq \left(\lambda\times\lambda_I\right)_\epsilon\colon E_{\hat{\Gamma}}^{i(\epsilon)}\rightarrow \R_+.}$  The product axial function is ${\dsp \xymap{E_{\hat{\Gamma}}\ar[r]^-{\hat{\alpha}} & \R^n\times\R\\
e\times v\ar@{|->}[r] & \left(\alpha(e),0\right)\\
v\times e\ar@{|->}[r] & \left(0,1\right).\\}}$ 

Let $\mathbf{1}\in\R^*$ denote the linear functional on $\R$ that maps $1$ to $1$.  Define the covector
${\dsp\hat{\xi}\coloneqq\left(\xi,\mathbf{1}\right)\in(\R^n\times\R)^* \cong(\R^n)^*\times\R^*.}$
Then $\hat{\xi}$ is generic and polarizing for $(\hat{\Gamma},\hat{\alpha}_\eta,\hat{\theta},\hat{\lambda})$ since
\begin{align*}
\langle\hat{\xi},\hat{\alpha}(e\times v)\rangle& =\langle\xi,\alpha(e)\rangle\\
\langle\hat{\xi},\hat{\alpha}(v\times e)\rangle & =\langle\mathbf{1},\alpha_I(e)\rangle.
\end{align*}
Also note that $(\hat{\Gamma},\hat{\alpha},\hat{\theta},\hat{\lambda})$ has enough $2$-faces; $2$-faces are either $Q\times v$ for $Q$ a $2$-face of $(\Gamma,\alpha,\theta,\lambda)$ or rectangles $e\times e'$.  Thus $(\hat{\Gamma},\hat{\alpha},\hat{\theta},\hat{\lambda})$ is reducible.

Set $\phi_-\coloneqq\min_{p\in V_\Gamma}(\phi(p))$ and $\phi_+\coloneqq\max_{p\in V_\Gamma}(\phi(p))$, and fix $a>\phi_+-\phi_->0$.  Define a $\hat{\xi}$-compatible Morse function by 
$$\xymap{ V_{\hat{\Gamma}}\ar[r]^-{\hat{\phi}} & \R\\
v\times t\ar@{|->}[r] &  \phi(v)+at.}$$

\begin{lemma}
\label{lem:cutlift}
In the notation above, if $(\Gamma,\alpha,\theta,\lambda)$ satisfies $\left(\dagger\right)$ in Theorem \ref{thm:ch2mainresult}, then the direct product 1-skeleton $(\hat{\Gamma},\hat{\alpha},\hat{\theta},\hat{\lambda})$ also satisfies $\left(\dagger\right)$.  Moreover if $c\in\R$ is any $\hat{\phi}$-regular value such that
$$\phi_+<c<\phi_-+a$$
then there is a linear isomorphism $\pi\colon W_{\hat{\xi}}\rightarrow\R^n$ that yields an equivalence of 1-skeleta $(\hat{\Gamma}_c,\pi\circ\hat{\alpha}_c^d,\hat{\theta}_c^d,\hat{\lambda}_c^d)\equiv(\Gamma,\alpha,\theta,\lambda)$.
\end{lemma}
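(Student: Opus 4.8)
The plan is to verify the two assertions in turn, in each case exploiting that $(\hat\Gamma,\hat\alpha,\hat\theta,\hat\lambda)$ is the \emph{product} of $(\Gamma,\alpha,\theta,\lambda)$ with the interval $1$-skeleton, and that for the stated range of $c$ the $c$-level set meets $\hat\Gamma$ only in the ``vertical'' edges and in the rectangular $2$-faces. I use throughout that the $2$-faces of $\hat\Gamma$ are exactly the faces $Q\times v$ (with $Q$ a $2$-face of $\Gamma$ and $v\in\{0,1\}$) and the rectangles $e\times e'$ over edges $e$ of $\Gamma$, as recorded in the excerpt. For the first assertion, fix a $2$-face of $\hat\Gamma$. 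If it is $Q\times v$, then at a vertex $p\times v$ its normal edges are the edges $e\times v$ with $e\in N^p_Q$, together with the single vertical edge joining $p\times v$ to $p\times(1-v)$; along an edge $pq\times v$ of $Q\times v$ the product connection sends $e\times v$ to $\theta_{pq}(e)\times v$ and carries the vertical edge at $p$ to the vertical edge at $q$, with the product compatibility system assigning $\lambda_{pq}(e)$ to the former and $1$ to the latter. Hence the normal holonomy of $Q\times v$ restricted to the edges $e\times v$ reproduces that of $Q$ in $\Gamma$ (trivial, with normal holonomy numbers $1$, by $(\dagger)$ for $\Gamma$), while on the vertical edge every holonomy is the identity with holonomy number a product of $1$'s. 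If instead the face is the rectangle over $e=pq$, then at $p\times 0$ its normal edges are the $e''\times 0$ with $e''\in E^p_\Gamma\setminus\{pq\}$, and transporting $e''\times 0$ once around the square $p\times 0,\ q\times 0,\ q\times 1,\ p\times 1$ yields, by the product rule and $\theta_{qp}\circ\theta_{pq}=\mathrm{id}$, the chain $e''\times 0\mapsto\theta_{pq}(e'')\times 0\mapsto\theta_{pq}(e'')\times 1\mapsto e''\times 1\mapsto e''\times 0$, with associated product of compatibility constants $\lambda_{pq}(e'')\cdot 1\cdot\lambda_{qp}(\theta_{pq}(e''))\cdot 1=1$, the last step being the compatibility rule $\lambda_{\bar\epsilon}\circ\theta_\epsilon=1/\lambda_\epsilon$. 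So every $2$-face of $\hat\Gamma$ has trivial normal holonomy and is level, which together with the reducibility already noted gives $(\dagger)$ for $\hat\Gamma$.

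For the second assertion, observe first that $\hat\phi(v\times 0)=\phi(v)\le\phi_+<c$ and $\hat\phi(v\times 1)=\phi(v)+a\ge\phi_-+a>c$, so the level $c$ separates the slab $V_\Gamma\times 0$ (lying entirely below $c$) from the slab $V_\Gamma\times 1$ (lying entirely above $c$). Consequently the only oriented edges of $\hat\Gamma$ crossing $c$ are the vertical ones $v\times(0,1)$, so $V_{\hat\Gamma_c}$ is canonically $V_\Gamma$; and the only $2$-faces straddling $c$ are the rectangles (each $Q\times v$ lies on one side), so $E_{\hat\Gamma_c}$ consists of the oriented rectangles. For an oriented edge $pq$ of $\Gamma$ let $\hat Q_{pq}$ denote the rectangle over $\{p,q\}$ oriented so that its initial $c$-vertex is $p\times(0,1)$ and its terminal $c$-vertex is $q\times(0,1)$ (one checks directly that exactly one of the two orientations has this property, and that $\overline{\hat Q_{pq}}=\hat Q_{qp}$). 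Then $pq\mapsto\hat Q_{pq}$ is an isomorphism of graphs $\Gamma\to\hat\Gamma_c$; moreover the $c$-edges at the $c$-vertex $p\times(0,1)$ distinct from $\hat Q_{pq}$ are exactly the $\hat Q_{pp'}$ for $pp'\in E^p_\Gamma\setminus\{pq\}$, and under the ``enough $2$-faces'' correspondence $\hat Q_{pp'}$ is matched with the normal edge $pp'\times 0$ to the rectangle over $\{p,q\}$ at $p\times 0$.

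It remains to match the remaining data of the down $c$-cross section. The lower path of $\hat Q_{pq}$ runs inside the rectangle from $p\times 0$ to $q\times 0$, hence is the single edge $pq\times 0$. Therefore the down-connection map $(\hat\theta^d_c)_{\hat Q_{pq}}$ is, under the identifications above, the normal connection $\hat\theta^\perp_{pq\times 0}$, which by the product rule sends $pp'\times 0$ to $\theta_{pq}(pp')\times 0$, i.e.\ $\hat Q_{pp'}$ to $\hat Q_{\theta_{pq}(pp')}$; thus $\hat\theta^d_c=\theta$. Likewise the local down-compatibility number of $\hat Q_{pq}$ at $\hat Q_{pp'}$, computed along this length-one path, equals $\hat\lambda_{pq\times 0}(pp'\times 0)=\lambda_{pq}(pp')$, so $\hat\lambda^d_c=\lambda$. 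Finally, with the indexing conventions $t_1=p\times 0$, $t_0=p\times 1$, $t_2=q\times 0$, formula \eqref{eq:downaxial} gives
\[\hat\alpha^d_c(\hat Q_{pq})=\frac{\iota\bigl((0,1)\wedge(\alpha(pq),0)\bigr)}{\langle\hat\xi,(0,1)\rangle}=\bigl(\alpha(pq),\,-\langle\xi,\alpha(pq)\rangle\bigr)\in W_{\hat\xi}.\]
Since $W_{\hat\xi}=\{(x,-\langle\xi,x\rangle)\mid x\in\R^n\}$, the first-coordinate projection $\pi\colon W_{\hat\xi}\to\R^n$ is a linear isomorphism, and $\pi\bigl(\hat\alpha^d_c(\hat Q_{pq})\bigr)=\alpha(pq)$. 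Hence under the graph isomorphism $pq\mapsto\hat Q_{pq}$ the quadruple $(\hat\Gamma_c,\pi\circ\hat\alpha^d_c,\hat\theta^d_c,\hat\lambda^d_c)$ equals $(\Gamma,\alpha,\theta,\lambda)$, so Definition \ref{def:eqgenaxial} holds with $\kappaup\equiv 1$, which is the required equivalence.

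The content of the argument is light: because $\hat\Gamma$ is a product and the chosen cross section sees only vertical edges and rectangles, the connection and compatibility system of the cross section are inherited verbatim, and the only genuine computation---the $\xi$-interior-product formula for $\hat\alpha^d_c$---collapses to a projection. The main obstacle is therefore purely bookkeeping: one must pin down the orientation conventions of the cross-section construction (which of the two $c$-vertices of a rectangle is initial, which side-path is the ``lower'' one, the index conventions $r_0,t_0$ in \eqref{eq:upaxial}--\eqref{eq:downaxial}, and the $p$-side versus $q$-side identification of $c$-edges with normal edges) so that the single assignment $pq\leftrightarrow\hat Q_{pq}$ is simultaneously compatible with the graph, the connection, the compatibility system, and the axial function.
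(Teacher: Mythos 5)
Your proof is correct and follows essentially the same route as the paper's: the same case split on the two types of $2$-faces of the product (with the same holonomy and compatibility computations for the rectangles), the same identification of the $c$-level data with $V_\Gamma$, $E_\Gamma$, $\theta$, $\lambda$ via the single-edge lower paths, and the same $\iota$-computation showing $\hat\alpha^d_c(pq\times 01)=(\alpha(pq),-\langle\xi,\alpha(pq)\rangle)$ so that the coordinate projection off $W_{\hat\xi}$ gives the equivalence. Your version is merely more explicit about the orientation and indexing conventions, which the paper leaves implicit.
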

\begin{proof}
Using the notation above, assume that $(\Gamma,\alpha,\theta,\lambda)$ satisfies $(\dagger)$.  The direct product has two types of $2$-faces:
\begin{enumerate}[(i)]
\item $2$-faces of the form $Q\times v$, for $Q$ a $2$-face of $(\Gamma,\alpha,\theta,\lambda)$.
\item Rectangles of the form $e\times e'$, for $e\in E_\Gamma$ and $e'\in E_I$.
\end{enumerate}
Clearly $2$-faces of type (i) are level with trivial normal holonomy since the $\hat{\theta}$ agrees with $\theta$ and $\hat{\lambda}$ agrees with $\lambda$ on $Q\times v$.  Let $F\coloneqq pq\times 01$ be a $2$-face of type (ii), and let $\gamma_F$ denote the loop $p\times 0\sra q\times 0\sra q\times 1\sra p\times 1\sra p\times 0$.  Then the normal holonomy map $K_{\gamma_F}^\perp$ has the form 
$$K_{\gamma_F}^\perp=\left(\theta_I\right)_{10}\circ\theta_{qp}\circ\left(\theta_I\right)_{01}\circ\theta_{pq},$$
which is clearly the identity map on $N_F^{p\times 0}$.  Moreover the normal holonomy number has the form
$$\left|K_{\gamma_F}^\perp\right|=\left(\lambda_I\right)_{10}\cdot\lambda_{qp}\cdot\left(\lambda_I\right)_{01}\cdot\lambda_{pq},$$
which is identically one on $N_F^{p\times 0}$.  This shows that $(\dagger)$ holds for $(\hat{\Gamma},\hat{\alpha},\hat{\theta},\hat{\lambda})$.

To establish the equivalence of $(\Gamma,\alpha,\theta,\lambda)$ with the down $c$-cross section of $(\hat{\Gamma},\hat{\alpha},\hat{\theta},\hat{\lambda})$, first note that the oriented edges at $c$-level are those of the form $v\times 01$.  Indeed all vertices of the form $p\times 0$ have ${\dsp \hat{\phi}(p\times 0)=\phi(p)\leq\phi_+ \ < \ c}$ and all vertices of the form $q\times 1$ have ${\dsp \hat{\phi}(q\times 1)=\phi(q)+a\geq\phi_- +a \ > \ c.}$  Similarly the oriented $2$-faces at $c$-level are the oriented rectangles of the form $e\times 01$.  This defines a bijection between graphs $\Gamma$ and $\hat{\Gamma}_c$:
$$\xymap{V_\Gamma \sqcup E_\Gamma \ar[r] & \hat{V}_c\sqcup \hat{E}_c\\
v\ar@{|->}[r] & v\times 01\\
e\ar@{|->}[r] & e\times 01.\\}$$
The down connection map ${\dsp \left(\hat{\theta}_c^d\right)_{e\times 01}=K_{\gamma^{e}}^\perp}$ is simply equal to $\theta_e$ by definition.  Likewise, the down compatibility map ${\dsp \left(\hat{\lambda}_c^d\right)_{e\times 01}=\left|K_{\gamma^{e}}^\perp\right|}$ is equal to $\lambda_e$.  Finally we have
\begin{align*}
\hat{\alpha}_c^d\left(pq\times 01\right) & =\hat{\alpha}(pq\times 0)-\frac{\langle\hat{\xi},\hat{\alpha}(pq\times 0)\rangle}{\langle\hat{\xi},\hat{\alpha}(p\times 01)\rangle}\hat{\alpha}(p\times 01)\\
{}\\
& =\left(\begin{array}{c}\alpha(pq)\\ 0\\ \end{array}\right)-\frac{\langle\xi,\alpha(pq)\rangle}{1}\left(\begin{array}{c} 0\\ 1\\ \end{array}\right)\\
{}\\
& =\left(\begin{array}{c} \alpha(pq)\\ -\langle\xi,\alpha(pq)\rangle \\ \end{array}\right).
\end{align*}
Thus the restriction of the projection map $\pi\colon\R^n\times\R\rightarrow\R^n$ to the vanishing hyperplane of $\hat{\xi}$, $W_{\hat{\xi}}$, is a linear isomorphism such that $\pi\circ\hat{\alpha}_c^d=\alpha$.
\end{proof}

\begin{figure}
%\begin{center}
\includegraphics{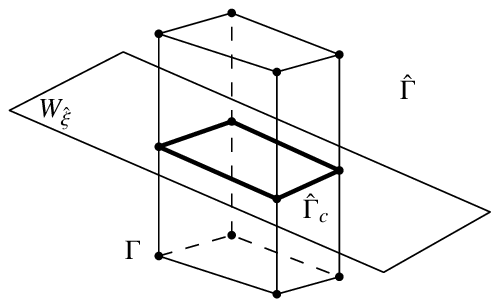}
%\end{center}
\caption{Cutting}
\label{fig:cuttingill}
\end{figure}

\section{Application:  Projecting and Lifting 1-Skeleta}
\subsection{Projections of 1-skeleta}
Fix a $d$-valent pre 1-skeleton $(\Gamma,\theta,\lambda)$, let $A\colon E_\Gamma\rightarrow\R^N$ be an effective generalized axial function and let $p\colon\R^N\rightarrow \R^n$ be any surjective linear map.  Then $p\circ A$, is also an effective generalized axial function for $(\Gamma,\theta,\lambda)$, and the resulting generalized 1-skeleton, $(\Gamma,p\circ A,\theta,\lambda)$, is called the \emph{projection} of $(\Gamma,A,\theta,\lambda)$.  

Conversely, we say a generalized 1-skeleton $(\Gamma,\alpha,\theta,\lambda)$ in $\R^n$ has a \emph{lift} if there is an effective generalized 1-skeleton $(\Gamma,A,\theta,\lambda)$ in $\R^N$ ($n \ < \ N \ \leq \ d$) and a surjective linear map $p\colon\R^N\rightarrow\R^n$ such that $\alpha=p\circ A$. We say $(\Gamma,\alpha,\theta,\lambda)$ has a \emph{total lift} if it has a lift to an effective generalized 1-skeleton $(\Gamma,A,\theta,\lambda)$ in $\R^d$.  Note that an effective axial function $A\colon E_\Gamma\rightarrow\R^d$ for a $d$-valent pre 1-skeleton $(\Gamma,\theta,\lambda)$ is necessarily $d$-independent.  

\begin{figure*}
  \includegraphics[width=0.75\textwidth]{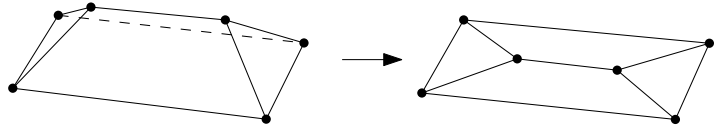}
\caption{Projection}
\label{fig:Projection}       % Give a unique label
\end{figure*}

For the convenience of the reader, we restate Theorem \ref{thm:ch2mainresult} here.
\begin{theorem1*}
%\label{thm:ch2mainresult}
Let $(\Gamma,\alpha,\theta,\lambda)\subset\R^n$ be a $d$-valent reducible 1-skeleton in $\R^n$.  Then $(\Gamma,\alpha,\theta,\lambda)$ has a total lift if and only if\\
${\dsp\left(\dagger\right) \ \ \  \text{Every $2$-face of $(\Gamma,\alpha,\theta,\lambda)$ is level and has trivial normal holonomy}.}$
\end{theorem1*}
Note that a (total) lift $(\Gamma,A,\theta,\lambda)$ of a reducible 1-skeleton $(\Gamma,\alpha,\theta,\lambda)$ is necessarily reducible.  Indeed any generic covector $\xi\in\left(\R^n\right)^*$ for $(\Gamma,\alpha,\theta,\lambda)$ pulls back via the projection $p\colon\R^d\rightarrow\R^n$ to a generic covector $\Xi=\xi\circ p\in\left(\R^d\right)^*$ for $(\Gamma,A,\theta,\lambda)$ satisfying $\langle\Xi,A(e)\rangle=\langle\xi,\alpha(e)\rangle$.  Since $(\Gamma,A,\theta,\lambda)$ and $(\Gamma,\alpha,\theta,\lambda)$ have the same connection and polarization, they must also have the same $2$-faces.
\subsection{Proof of Theorem \ref{thm:ch2mainresult}}
Before proving Theorem \ref{thm:ch2mainresult} we will need the following lemmata.  The first lemma asserts that total liftability is preserved by a blow-up or a blow-down.  

Fix a $d$-valent generalized 1-skeleton $(\Gamma,\alpha,\theta,\lambda)$ in $\R^n$, a $k$-valent (level) subskeleton $(\Gamma_0,\alpha_0,\theta_0,\lambda_0)$, and a blow-up system for the subskeleton $n\colon N_0\rightarrow\R_+$.  Let $(\Gamma^\sharp,\alpha^\sharp,\theta^\sharp,\lambda^\sharp)$ denote the corresponding blow-up generalized 1-skeleton.

\begin{lemma}
\label{lem:blowupcommuteslift}
$(\Gamma,\alpha,\theta,\lambda)$ has a total lift if and only if $(\Gamma^\sharp,\alpha^\sharp,\theta^\sharp,\lambda^\sharp)$ has a total lift.  
\end{lemma}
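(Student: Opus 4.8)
The plan is to exploit that the blow-up graph $\Gamma^\sharp$, the connection $\theta^\sharp$, the compatibility system $\lambda^\sharp$, and the defining relation \eqref{eq:E.6} for a blow-up system all depend only on the pre-$1$-skeleton $(\Gamma,\theta,\lambda)$ and the subskeleton, while the axial function enters only through Table \ref{tab:bualphavalues}, whose entries are \emph{linear} in $\alpha$. Hence the same $n\colon N_0\to\R_+$ is simultaneously a blow-up system for $(\Gamma,\alpha,\theta,\lambda)$ and for every total lift of it, and blow-up commutes with post-composing the axial function with a linear map. Both implications will follow from this observation, the first almost formally.

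For the forward direction, suppose $(\Gamma,A,\theta,\lambda)$ is a total lift of $(\Gamma,\alpha,\theta,\lambda)$, say $\alpha=\pi\circ A$ for a surjective linear map $\pi\colon\R^d\to\R^n$. Let $A^\sharp$ be the blow-up of $A$ along $(\Gamma_0,\alpha_0,\theta_0,\lambda_0)$ with respect to $n$; by the blow-up construction it is a generalized axial function for $(\Gamma^\sharp,\theta^\sharp,\lambda^\sharp)$. Reading Table \ref{tab:bualphavalues} for $A$ against the same table for $\alpha$, and using linearity of $\pi$ together with \eqref{eq:alphanotlocus}, gives $\alpha^\sharp=\pi\circ A^\sharp$. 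It remains to see that $A^\sharp$ is effective, i.e.\ $d$-independent: at a vertex of $\Gamma^\sharp$ outside $V_0^\sharp$ the edge vectors of $A^\sharp$ are those of $A$ at the image vertex, hence a basis; and at $z^p_e$ the $d$ vectors $\tfrac1{n(e)}A(e)$, the $A(pq)$ with $pq\in E_0^p$, and the $A(e')-\tfrac{n(e')}{n(e)}A(e)$ with $e'\in N_0^p\setminus\{e\}$ span the same subspace as $\{A(e'')\mid e''\in E^p\}=\R^d$. Thus $(\Gamma^\sharp,A^\sharp,\theta^\sharp,\lambda^\sharp)$ is a total lift of the blow-up.

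For the converse I would ``blow down'' a given total lift. Let $(\Gamma^\sharp,A^\sharp,\theta^\sharp,\lambda^\sharp)$ be a total lift of $(\Gamma^\sharp,\alpha^\sharp,\theta^\sharp,\lambda^\sharp)$ with $\alpha^\sharp=\pi\circ A^\sharp$, and define $A\colon E_\Gamma\to\R^d$ by: $A(e):=A^\sharp(e)$ for $e\in E_\Gamma\setminus(E_0\cup N_0\cup\overline{N}_0)$; $A(\bar e):=A^\sharp([t(e)]z^p_e)$ and $A(e):=n(e)\,A^\sharp(z^p_e[t(e)])$ for $e\in N_0^p$; and $A(pq):=A^\sharp(z^p_ez^q_f)$ with $f=\theta_{pq}(e)$, for $pq\in E_0$ and \emph{any} $e\in N_0^p$. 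The essential point is that this last value is independent of $e$: applying gA2 at $z^p_e$ with base the vertical edge $z^p_ez^p_{e'}$ and second edge $z^p_ez^q_f$, and reading $\lambda^\sharp_{z^p_ez^p_{e'}}(z^p_ez^q_f)=1$ and $\theta^\sharp_{z^p_ez^p_{e'}}(z^p_ez^q_f)=z^p_{e'}z^q_{f'}$ off Tables \ref{tab:buthetavalues} and \ref{tab:bulambdavalues}, one obtains $A^\sharp(z^p_ez^q_f)-A^\sharp(z^p_{e'}z^q_{f'})\in\R\cdot A^\sharp(z^p_ez^p_{e'})$; both terms on the left map under $\pi$ to $\alpha(pq)$, so the difference lies in $\ker\pi$, while $A^\sharp(z^p_ez^p_{e'})$, being one of the $d$ independent edge vectors at $z^p_e$, maps under $\pi$ to $\alpha^\sharp(z^p_ez^p_{e'})=\alpha(e')-\tfrac{n(e')}{n(e)}\alpha(e)$, which is nonzero, so the scalar must vanish. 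Granting this, $\pi\circ A=\alpha$ by construction; gA1 for $A$ is inherited from gA1 for $A^\sharp$ (the positive constants transfer, since the relevant values of $A^\sharp$ are nonzero); gA2 for $A$ at a vertex $v\notin V_0$ is immediate from \eqref{eq:thetanotlocus}, \eqref{eq:lambdanotlocus}, \eqref{eq:alphanotlocus}, and at $p\in V_0$ it reduces to gA2 for $A^\sharp$ at a singular-locus vertex $z^p_e$, modulo replacing a value $A(\theta_e(e'))$ by itself plus a scalar multiple of $A(e)$ coming from the vertical-edge row of Table \ref{tab:bualphavalues} (harmless, since gA2 only requires membership in $\R\cdot A(e)$); finally $d$-independence of $A$ at $p\in V_0$ is the span computation of the forward direction run backwards, using $d$-independence of $A^\sharp$ at $z^p_e$. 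This produces a total lift $(\Gamma,A,\theta,\lambda)$ of $(\Gamma,\alpha,\theta,\lambda)$.

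I expect the main obstacle to be precisely the well-definedness of the blown-down axial function along $E_0$: one must confine the ambiguity $A^\sharp(z^p_ez^q_f)-A^\sharp(z^p_{e'}z^q_{f'})$ to $\ker\pi$ — which is exactly where the $\lambda^\sharp=1$ entries along the vertical edges of the singular locus are used — and then kill it using the $d$-independence of $A^\sharp$ and the non-vanishing of $\alpha^\sharp$. Everything else is routine bookkeeping that parallels the blow-up verification already carried out in Subsection 3.2.
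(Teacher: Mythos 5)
Your proposal is correct and follows essentially the same route as the paper: blow up the lifted axial function and use linearity of the projection for the forward direction, and for the converse blow down the lift of $(\Gamma^\sharp,\alpha^\sharp,\theta^\sharp,\lambda^\sharp)$ after showing it is constant on the fibers of $\beta$ over $E_0$ via exactly the same vertical-edge/gA2 argument (project to see the difference lies in $\ker\pi$, then kill the scalar). The only difference is that you spell out the $d$-independence check at the singular-locus vertices $z^p_e$, which the paper leaves implicit.
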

\begin{proof}
Let $(\Gamma,A,\theta,\lambda)$ be a total lift of $(\Gamma,\alpha,\theta,\lambda)$ via a surjective map $p\colon\R^d\rightarrow\R^n$.  Then the function $A^\sharp\colon E^\sharp\rightarrow\R^d$ defined as in Definition \ref{def:blowup1sk} is an axial function for $(\Gamma^\sharp,\theta^\sharp,\lambda^\sharp)$.  Moreover the linearity of $p$ guarantees that $p\circ A^\sharp=\alpha^\sharp$.  Finally note that ${\dsp \left\{A^\sharp(e)\left|\right. e\in E^p\right\}}$ and ${\dsp\left\{A(e)\left|\right. e\in E^p\right\}}$ coincide for $p\notin V_0^\sharp$.  Thus since $A$ is $d$-independent, $A^\sharp$ must also be $d$-independent.  

Conversely let $(\Gamma^\sharp,\tilde{A},\theta^\sharp,\lambda^\sharp)$ be a total lift of $(\Gamma^\sharp,\alpha^\sharp,\theta^\sharp,\lambda^\sharp)$ via a surjective map $p\colon\R^d\rightarrow\R^n$.  In order to define a total lift $A\colon E_\Gamma\rightarrow\R^d$ for $\alpha$, it suffices to see that $\tilde{A}$ is constant on the fibers of the blow-up morphism.  Indeed if $A$ is constant on the fibers $\beta^{-1}(e)$ for each $e\in E_0$, then the function $A\colon E_\Gamma\rightarrow\R^d$ given by
\begin{equation}
\label{eq:blaxials}
A(e)=\begin{cases}
n(e)\tilde{A}(z^p_e[t(e)]) & \text{if $e\in N_0$}\\
\tilde{A}(\epsilon) & \text{if $e\in E_\Gamma\setminus N_0$ and $\epsilon\in\beta^{-1}(e)$}\\
\end{cases}
\end{equation}
is well-defined.  Fix a vertex $p\in V_\Gamma$, oriented edges $e,e'\in E^p$, and consider the difference 
\begin{equation}
\label{eq:ADifference}
A(e')-\lambda_e(e')A(\theta_e(e')).
\end{equation}
Fix $\epsilon\in \beta^{-1}(e)$ and $\epsilon'\in \beta^{-1}(e')$.  For $e'\in N_0$, Eq. \eqref{eq:ADifference} becomes 
$$n(e')\tilde{A}(\epsilon')-\lambda_e(e')\cdot n(\theta_e(e'))\tilde{A}(\theta^\sharp_{\epsilon}(\epsilon'))=n(e')\left(\tilde{A}(\epsilon')- 1\cdot\tilde{A}(\theta^\sharp_{\epsilon}(\epsilon'))\right),$$
which is a multiple of $\tilde{A}(\epsilon)=\frac{1}{n(e)}A(e)$.
For $e'\notin N_0$, Eq. \eqref{eq:ADifference} becomes
$$\tilde{A}(\epsilon')-\lambda^\sharp_{\epsilon}(\epsilon')\tilde{A}(\theta^\sharp_\epsilon(\epsilon')),$$
which is also a multiple of $\tilde{A}(\epsilon)=A(e)$.  Thus Eq. \eqref{eq:blaxials} defines a generalized axial function for $(\Gamma,\theta,\lambda)$.  Moreover it is clear that $A$ is $d$-independent (since $\tilde{A}$ is $d$-independent), and that $p\circ A=\alpha$.  Thus it remains to see that $\tilde{A}$ is constant on $\beta^{-1}(e)$ for each $e\in E_0$.

Fix $e\in E_0$ and let $\epsilon',\epsilon''$ be any two edges in the fiber $\beta^{-1}(e)$.  We need to show that $\tilde{A}(\epsilon')=\tilde{A}(\epsilon'')$.  If $\epsilon'$ and $\epsilon''$ are not distinct there is nothing to show.  Otherwise $\epsilon'$ and $\epsilon''$ are distinct edges in the same fiber, hence there must be a vertical edge $\epsilon$ joining the vertices $i(\epsilon')$ and $i(\epsilon'')$.  Thus we have 
\begin{equation}
\label{eq:liftblow}
\tilde{A}(\epsilon')-\lambda^\sharp_{\epsilon}(\epsilon')\tilde{A}(\epsilon'')=c\tilde{A}(\epsilon)
\end{equation}
for some $c\in\R$.  The claim is that $c=0$.  To see this, apply the projection $p$ to both sides of \eqref{eq:liftblow} to get
\begin{equation}
\label{eq:liftblowp}
\alpha^\sharp(\epsilon')-\lambda^\sharp_{\epsilon}(\epsilon')\alpha^\sharp(\epsilon'')=c\alpha^\sharp(\epsilon);
\end{equation}
Since $\lambda^\sharp_{\epsilon}(\epsilon')=1$ and $\alpha^\sharp(\epsilon')=\alpha^\sharp(\epsilon'')$, we deduce that the LHS of \eqref{eq:liftblowp} must be zero.  This implies that $c$ must also be equal to zero, as desired.
\end{proof}

The next lemma asserts that any down cross section of a reducible 1-skeleton satisfying $\left(\dagger\right)$ must have a total lift.  See Fig.~\ref{fig:PrettyPassage}.

\begin{lemma}
\label{lem:downcrosslift}
Let $(\Gamma,\alpha,\theta,\lambda)$ be a $d$-valent reducible 1-skeleton satisfying $(\dagger)$ in Theorem \ref{thm:ch2mainresult}, $\xi$ a polarizing covector for $(\Gamma,\alpha,\theta,\lambda)$, and $\phi$ a compatible Morse function.  Then for any $\phi$-regular value $c\in\R$, the $(d-1)$-valent generalized 1-skeleton $(\Gamma_c,\alpha_c^d,\theta_c^d,\lambda_c^d)$ has a total lift.
\end{lemma}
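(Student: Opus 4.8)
The plan is to prove that the property \emph{``the down $c$-cross section $(\Gamma_c,\alpha_c^d,\theta_c^d,\lambda_c^d)$ has a total lift''} is the same for every $\phi$-regular value $c$, and then to exhibit one value of $c$ for which it visibly holds. The two engines are Lemma \ref{lem:equivblowup} (passage over a critical value, realized as a blow-up/blow-down) and Lemma \ref{lem:blowupcommuteslift} (total liftability is preserved by blow-up). One also needs the elementary remark that \emph{equivalence of generalized $1$-skeleta preserves total liftability}: if $(\Gamma,A,\theta,\lambda)$ is a total lift of $(\Gamma,\alpha,\theta,\lambda)\subset\R^{n}$ via a surjective linear map $p$, and $(\Gamma,\alpha,\theta,\lambda)\equiv(\Gamma,\tilde\alpha,\tilde\theta,\tilde\lambda)$ through $\kappa\colon E_\Gamma\to\R_+$, then $\tilde A(e)\coloneqq\kappa(e)^{-1}A(e)$ is a total lift of $\tilde\alpha$ via the same $p$; rescaling by a positive function preserves independence and effectiveness, $p\circ\tilde A=\tilde\alpha$ by linearity, and the identity $\tilde\lambda_e(e')=\tfrac{\kappa(\theta_e(e'))}{\kappa(e')}\lambda_e(e')$ turns gA2 for $A$ into gA2 for $\tilde A$ relative to $(\Gamma,\tilde\theta,\tilde\lambda)$.

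For the independence, let $c<c'$ be two consecutive $\phi$-regular values, so that there is a single vertex $p$ with $c<\phi(p)<c'$; put $r=\operatorname{ind}_\xi(p)$. Because $(\Gamma,\alpha,\theta,\lambda)$ satisfies $(\dagger)$, every $2$-face of $\Gamma$, in particular every $2$-face in $E_c\cup E_{c'}$, is level and has trivial normal holonomy, so Lemma \ref{lem:equivblowup} applies. I would then run the chain: (1) $(\Gamma_c,\alpha_c^d,\theta_c^d,\lambda_c^d)\equiv(\Gamma_c,\alpha_c^u,\theta_c^u,\lambda_c^u)$ by Lemma \ref{lem:equivblowup}(i); (2) $(\Gamma_c,\alpha_c^u,\theta_c^u,\lambda_c^u)$ has a total lift iff its blow-up $(\Gamma_c^\sharp,(\alpha_c^u)^\sharp,(\theta_c^u)^\sharp,(\lambda_c^u)^\sharp)$ along $(\Gamma_{c,0},\alpha_{c,0}^u,\theta_{c,0}^u,\lambda_{c,0}^u)$ does, by Lemma \ref{lem:blowupcommuteslift}, where the complete subskeleton $\Gamma_{c,0}$ is level because the compatibility constants of its internal edges acting on its normal edges are all $1$ (these are precisely the scalars appearing in the verification that $n^u$ in \eqref{eq:upblnumb} is a blow-up system); (3) $(\Gamma_c^\sharp,(\alpha_c^u)^\sharp,(\theta_c^u)^\sharp,(\lambda_c^u)^\sharp)\equiv(\Gamma_{c'}^\sharp,(\alpha_{c'}^d)^\sharp,(\theta_{c'}^d)^\sharp,(\lambda_{c'}^d)^\sharp)$ by Lemma \ref{lem:equivblowup}(ii); (4) $(\Gamma_{c'},\alpha_{c'}^d,\theta_{c'}^d,\lambda_{c'}^d)$ has a total lift iff its blow-up along $(\Gamma_{c',0},\alpha_{c',0}^d,\theta_{c',0}^d,\lambda_{c',0}^d)$ (with blow-up system $n^d$ of \eqref{eq:downblnumb}) does, again by Lemma \ref{lem:blowupcommuteslift}, $\Gamma_{c',0}$ being level for the same reason. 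Combining (1)--(4) with the rescaling remark, the down $c$-cross section has a total lift iff the down $c'$-cross section does.

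Since any two $\phi$-regular values are joined by a finite chain of consecutive regular values, total liftability of the down cross section is the same for all $c$. Taking $c<\min_{p\in V_\Gamma}\phi(p)$, the $c$-vertex set $V_c=\{pq\in E_\Gamma\mid\phi(p)<c<\phi(q)\}$ is empty, so $\Gamma_c$ is the empty graph and $(\Gamma_c,\alpha_c^d,\theta_c^d,\lambda_c^d)$ vacuously admits a total lift; hence it admits one for every $\phi$-regular $c$. All of the genuinely hard content lives in Lemmas \ref{lem:equivblowup} and \ref{lem:blowupcommuteslift}; within the present argument the only points needing care are the levelness of the complete subskeletons $\Gamma_{c,0}$ and $\Gamma_{c',0}$ (so that Lemma \ref{lem:blowupcommuteslift} applies at each critical value, including the degenerate cases $r=0$ and $r=d$ at the extreme critical points, where one of these subskeletons is empty and the other is everything), together with the observation that equivalence preserves total liftability.
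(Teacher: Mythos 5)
Your inductive transfer step is exactly the paper's argument: the chain (1)--(4) combining Lemma \ref{lem:equivblowup}(i), Lemma \ref{lem:blowupcommuteslift}, Lemma \ref{lem:equivblowup}(ii), and Lemma \ref{lem:blowupcommuteslift} again, together with the (correct, and worth stating) remark that equivalence of generalized 1-skeleta preserves total liftability. The problem is your base case. You start from $c<\min_{p}\phi(p)$, where $\Gamma_c=\emptyset$, and propose to propagate upward. But the very first passage --- over the unique $\xi$-source $p_0$, where $r=\operatorname{ind}_\xi(p_0)=0$ --- is precisely the degenerate case you flag and then wave past. There $V_{c_0,0}=\emptyset$ while $V_{c_1,0}=E^{p_0}=V_{c_1}$, so the subskeleton $\Gamma_{c_1,0}$ is \emph{all} of $\Gamma_{c_1}$: it has no normal edges, $V_0^\sharp=\left\{z^p_e\mid e\in N_0^p\right\}=\emptyset$, and the ``blow-up'' $\Gamma_{c_1}^\sharp$ is the empty graph. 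Lemma \ref{lem:equivblowup}(ii) then asserts only $\emptyset\equiv\emptyset$, and Lemma \ref{lem:blowupcommuteslift} cannot be invoked to conclude that $\Gamma_{c_1}$ has a total lift from the fact that the empty graph does: its converse direction reconstructs $A(e)$ from the values of $\tilde{A}$ on the fibers $\beta^{-1}(e)$ via Eq.~\eqref{eq:blaxials}, and when the blow-up is empty every fiber is empty, so there is nothing to reconstruct from. The ``if and only if'' simply fails (or is vacuously inapplicable) when the blown-up subskeleton is the entire cross section, and your induction never gets off the ground.

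What is missing is genuine content that the paper supplies directly: the first nonempty cross section $(\Gamma_{c_1},\alpha_{c_1}^d,\theta_{c_1}^d,\lambda_{c_1}^d)$ is the complete graph on the $d$ edges $pq_1,\ldots,pq_d$ issuing from the source, with all down-compatibility constants equal to $1$, and one must exhibit a total lift of it by hand. The paper does this by verifying the relations \eqref{eq:completeaxial1} and \eqref{eq:completeaxial2} among the $\alpha_{c_1}^d(Q_{ij})$ and observing that any assignment of a basis of $\R^{d-1}$ to $E_{c_1}^{pq_1}=\{Q_{12},\ldots,Q_{1d}\}$ extends, via those same relations, to a $(d-1)$-independent generalized axial function $\hat{A}$ with $\alpha_{c_1}^d=p\circ\hat{A}$. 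Once you replace your vacuous base case with this explicit one (starting the induction at $c_1$ rather than below $\phi_-$), the rest of your argument goes through and coincides with the paper's proof; the levelness of $\Gamma_{c_i,0}$ and $\Gamma_{c_{i+1},0}$ that you check via Eq.~\eqref{eq:blupcon} is indeed what makes Lemma \ref{lem:blowupcommuteslift} applicable at the interior critical values.
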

\begin{proof}
Let $c_0<c_1<\cdots<c_N<c_{N+1}$ be $\phi$-regular values such that for each $p\in V_\Gamma$, $c_0<\phi(p)< c_{N+1}$ and for each $0\leq i\leq N$ there is a unique vertex $p_i\in V_\Gamma$ satisfying $c_i<\phi(p_i)<c_{i+1}$.  To complete the proof it suffices to show that $(\Gamma_{c_1},\alpha_{c_1}^d,\theta_{c_1}^d,\lambda_{c_1}^d)$ has a total lift.  Indeed inductively assume that $(\Gamma_{c_i},\alpha_{c_i}^d,\theta_{c_i}^d,\lambda_{c_i}^d)$ has a total lift.  Since all $2$-faces are level with trivial normal holonomy, Lemma \ref{lem:equivblowup} implies that
${\dsp(\Gamma_{c_i},\alpha_{c_i}^u,\theta_{c_i}^u,\lambda_{c_i}^u)\equiv (\Gamma_{c_i},\alpha_{c_i}^d,\theta_{c_i}^d,\lambda_{c_i}^d).}$
Hence $(\Gamma_{c_i},\alpha_{c_i}^u,\theta_{c_i}^u,\lambda_{c_i}^u)$ must also have a total lift.  By Lemma \ref{lem:blowupcommuteslift}, the blow-up
${\dsp(\Gamma_{c_i}^\sharp,(\alpha_{c_i}^u)^\sharp,(\theta_{c_i}^u)^\sharp,(\lambda_{c_i}^u)^\sharp)}$
has a total lift.  Again Lemma \ref{lem:equivblowup} implies that $(\Gamma_{c_i}^\sharp,(\alpha_{c_i}^u)^\sharp,(\theta_{c_i}^u)^\sharp,(\lambda_{c_i}^u)^\sharp)\equiv(\Gamma_{c_{i+1}}^\sharp,(\alpha_{c_{i+1}}^d)^\sharp,(\theta_{c_{i+1}}^d)^\sharp,(\lambda_{c_{i+1}}^d)^\sharp)$.
Hence ${\dsp(\Gamma_{c_{i+1}}^\sharp,(\alpha_{c_{i+1}}^d)^\sharp,(\theta_{c_{i+1}}^d)^\sharp,(\lambda_{c_{i+1}}^d)^\sharp)}$ has a total lift as well.  Hence by Lemma \ref{lem:blowupcommuteslift} again,  $(\Gamma_{c_{i+1}},\alpha_{c_{i+1}}^d,\theta_{c_{i+1}}^d,\lambda_{c_{i+1}}^d)$ must also have a lift, and so on.

Note that the graph of the $c_1$-cross section ${\dsp (\Gamma_{c_1},\alpha_{c_1}^d,\theta_{c_1}^d,\lambda_{c_1}^d)}$ is a complete graph on $d$ vertices.  For concreteness, set $p\coloneqq p_0$ the unique source, ${\dsp E^{p}=\left\{pq_i\left|\right. 1\leq i\leq d\right\}=V_{c_1}}$ the $c_1$-vertices, and ${\dsp E_{c_1}\coloneqq\left\{Q_{ij}\right\}}$ the oriented $c_1$-edges where $Q_{ij}$ is the oriented $2$-face spanned by the oriented edges $i(Q_{ij})=pq_i$ and $t(Q_{ij})=pq_j$.  Note that the down connection on $\Gamma_{c_1}$ gives ${\dsp (\theta_c^d)_{Q_{ij}}(Q_{ik})=Q_{jk}}$ for all $1\leq i,j,k\leq d$.  Also by definition the compatibility system gives ${\dsp \left(\lambda_{c_1}^d\right)_{Q_{ij}}\left(Q_{ik}\right)=1}$ for all $1\leq i,j,k\leq d$.  Define constants ${\dsp \left\{m_{ij}\coloneqq\frac{\langle\xi,\alpha(pq_i)\rangle}{\langle\xi,\alpha(pq_j)\rangle}\left|\right. 1\leq i,j\leq d\right\}.}$  For all $1\leq i,j,k\leq d$ we have
\begin{align}
\label{eq:completeaxial1}
\alpha_{c_1}^d\left(Q_{ij}\right) & =-m_{ji}\cdot\alpha_{c_1}^d\left(Q_{ji}\right)\\
\label{eq:completeaxial2}
\alpha_{c_1}^d\left(Q_{ik}\right)-\alpha_{c_1}^d\left(Q_{jk}\right) & =m_{kj}\cdot\alpha_{c_1}^d\left(Q_{ij}\right).
\end{align}
Indeed the LHS of Eq. \eqref{eq:completeaxial2} gives
\begin{align*}
&\frac{\iota\left(\alpha(pq_i)\wedge\alpha(pq_k)\right)}{\langle\xi,\alpha(pq_i)\rangle}-\frac{\iota\left(\alpha(pq_j)\wedge\alpha(pq_k)\right)}{\langle\xi,\alpha(pq_j)\rangle}\\
=&\frac{\langle\xi,\alpha(pq_k)\rangle}{\langle\xi,\alpha(pq_j)\rangle}\alpha(pq_j)-\frac{\langle\xi,\alpha(pq_k)\rangle}{\langle\xi,\alpha(pq_i)\rangle}\alpha(pq_i)\\
=& m_{kj}\cdot\left(\alpha(pq_j)-\frac{\langle\xi,\alpha(pq_j)\rangle}{\langle\xi,\alpha(pq_i)\rangle}\alpha(pq_i)\right)
\end{align*}
which yields the RHS.  The verification of Eq. \eqref{eq:completeaxial1} is left to the reader.

Note that any function $A\colon E_{c_1}^{pq_1}\rightarrow\R^{d-1}$ that maps ${\dsp E_{c_1}^{pq_1}=\left\{Q_{12},Q_{13},\ldots, Q_{1d}\right\}}$ onto a basis extends to a generalized $d$-independent axial function $\hat{A}\colon E_{c_1}\rightarrow\R^{d-1}$ via the relations in Eqs. \eqref{eq:completeaxial1} and \eqref{eq:completeaxial2}.  Moreover $\alpha_{c_1}^d=p\circ\hat{A}$ where $p\colon\R^{d-1}\rightarrow\R^{n-1}$ is the surjective map mapping the basis vector $A\left(Q_{1i}\right)$ to the vector $\alpha\left(Q_{1i}\right)$.   

\end{proof}

\begin{figure}
%\begin{center}
\includegraphics[width=0.75\textwidth]{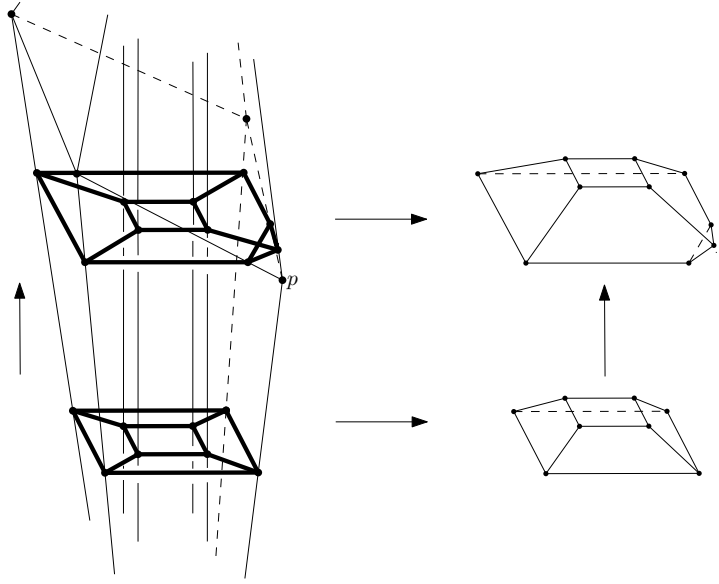}
%\end{center}
\caption{A Commutative Diagram}
\label{fig:PrettyPassage}
\end{figure}

We are now in a position to explicitly write down a proof of Theorem \ref{thm:ch2mainresult}.  
\begin{proof}[Proof of Theorem \ref{thm:ch2mainresult}]
Assume $(\Gamma,A,\theta,\lambda)$ is a total lift of $(\Gamma,\alpha,\theta,\lambda)$, and let $Q$ be a $2$-face with normal edges $N_Q\subset E_\Gamma$.  Fix a base point $p_0$ in $Q$ and let $\gamma^Q\colon p_0\sra p_1\cdots\sra p_m\sra p_0$ denote the loop in $Q$ based at $p_0$.  Note that the $d$-independence of $A$ implies that $Q$ is actually a $2$-slice for some $2$-dimensional subspace $H\subset\R^d$.  Now for any edge $e\in N_Q^{p_0}$, the subspace spanned by $A(e)$ and $H$ must necessarily contain the vector $A\left(K^\perp_{\gamma^Q}\left(e\right)\right)$.  Hence by $d$-independence of $A$, we must have $K^\perp_{\gamma_Q}\left(e\right)=e$, hence $Q$ has trivial normal holonomy.  To compute the local normal holonomy number for an oriented edge $e'\in N_Q$, choose and fix a covector $\eta\in\left(\R^d\right)^*$ vanishing on the subspace $H$, but not vanishing on $A(e')$.  Then for $e\in Q$ at $i(e')$ we have
\begin{equation}
\label{eq:dindependentcompatiblity}
\alpha(e')-\frac{\eta\left(\alpha(e')\right)}{\eta\left(\alpha(\theta_e(e'))\right)}\alpha(\theta_e(e')) \ \in H \cap \spa\left\{A(e),A(e')\right\}= \spa\left\{A(e)\right\}.
\end{equation}
By $d$-independence of $A$, we deduce that ${\dsp\lambda_e(e')=\frac{\eta\left(\alpha(e')\right)}{\eta\left(\alpha(\theta_e(e'))\right)}}$, and thus readily conclude that ${\dsp \left|K_{\gamma^Q}^\perp\right|=1}$.  

Conversely, let $(\Gamma,\alpha,\theta,\lambda)$ be reducible $d$-valent 1-skeleton in $\R^n$ whose $2$-faces are all level with trivial normal holonomy.  Then by Lemma \ref{lem:cutlift}, there is a $(d+1)$-valent 1-skeleton $(\hat{\Gamma},\hat{\alpha},\hat{\theta},\hat{\lambda})$ in $\R^n\times\R$ that is reducible via the covector $\hat{\xi}=(\xi,\mathbf{1})\in\left(\R^n\times\R\right)^*$ and that also satisfies $(\dagger)$.  Moreover Lemma \ref{lem:cutlift} tells us that $(\Gamma,\alpha,\theta,\lambda)\equiv(\hat{\Gamma}_c,\pi\circ\hat{\alpha}_c^d,\hat{\theta}_c^d,\hat{\lambda}_c^d)$, for some linear isomorphism $\pi\colon W_{\hat{\xi}}\rightarrow\R^n$.  By Lemma \ref{lem:downcrosslift} $(\hat{\Gamma}_c,\hat{\alpha}_c^d,\hat{\theta}_c^d,\hat{\lambda}_c^d)$ must have a total lift, and hence so must $(\hat{\Gamma}_c,\pi\circ\hat{\alpha}_c^d,\hat{\theta}_c^d,\hat{\lambda}_c^d)$.  Thus $(\Gamma,\alpha,\theta,\lambda)$ also has a total lift.  

This establishes Theorem \ref{thm:ch2mainresult}.
\end{proof}

The proof of Corollary \ref{cor:simplepolytope}, which characterizes 1-skeleta of projected simple polytopes, appeals to a result of Guillemin and Zara dealing with the \emph{equivariant cohomology ring} of a 1-skeleton and its \emph{generating families}.  We refer the reader to \cite{GZ1} for definitions and further details regarding these concepts.
\begin{corollary1*}
A $d$-valent 1-skeleton $(\Gamma,\alpha,\theta,\lambda)$ is the 1-skeleton of a projected simple polytope if and only if $(\Gamma,\alpha,\theta,\lambda)$ is reducible, all its $2$-faces are level with trivial normal holonomy, and it admits an embedding $F\colon V_\Gamma\rightarrow \R^n$.
\end{corollary1*}  
\begin{proof}
Assume $(\Gamma,\alpha,\theta,\lambda)$ is the 1-skeleton of a projected simple $d$-polytope $S\subset\R^d$ via projection $p\colon\R^d\rightarrow\R^n$.  Then the 1-skeleton of $S$, $(\Gamma,A,\theta,\lambda)$, is a total lift of $(\Gamma,\alpha,\theta,\lambda)$ with respect to $p\colon\R^d\rightarrow\R^n$.  Thus by Theorem \ref{thm:ch2mainresult} all $2$-faces of $(\Gamma,\alpha,\theta,\lambda)$ are level with trivial normal holonomy.  Moreover the natural embedding 
$$\xymap{ V_S\ar[r]^-{F} & \R^d\\
v\ar@{|->}[r] & \vec{v}\\}$$
of $(\Gamma,A,\theta,\lambda)$ composes with the projection $p$ to give an embedding $f\coloneqq p\circ F\colon E_\Gamma\rightarrow \R^n$ of $(\Gamma,\alpha,\theta,\lambda)$.

Conversely assume that $(\Gamma,\alpha,\theta,\lambda)$ is a $d$-valent reducible 1-skeleton in $\R^n$ whose $2$-faces are level with trivial normal holonomy, and that admits an embedding $f\colon V_\Gamma\rightarrow\R^n$.  Then by Theorem \ref{thm:ch2mainresult}, $(\Gamma,\alpha,\theta,\lambda)$ has a total lift $(\Gamma,A,\theta,\lambda)\subset\R^d$ with respect to some projection $p\colon\R^d\rightarrow\R^n$.  Since $(\Gamma,A,\theta,\lambda)$ is a $d$-valent $d$-independent noncyclic 1-skeleton, Theorem 2.4.4 in \cite{GZ1} asserts that its equivariant cohomology ring admits a generating family $\left\{\tau_q\right\}_{q\in V_\Gamma}\subseteq H(\Gamma,A)$.  But the classes $\left\{p\circ\tau_q\right\}_{q\in V_\Gamma}\subseteq H(\Gamma,\alpha)$ are a generating family for the equivariant cohomology ring of the projection $(\Gamma,\alpha,\theta,\lambda)$.  This implies that composition with the projection $p\colon\R^d\rightarrow\R^n$ defines a surjective map on equivariant cohomology rings 
$$\xymap{H(\Gamma,A)\ar[r]^{p^*} & H(\Gamma,\alpha)\\ G\ar@{|->}[r] & p\circ G.\\}$$  
Since $f\in H^1(\Gamma,\alpha)$, there must exist $F\in H^1(\Gamma,A)$ satisfying $f=p\circ F$.  By linearity of $p$, $F\colon V_\Gamma\rightarrow \R^d$ must be an embedding of $(\Gamma,A,\theta,\lambda)$. 
Hence $(\Gamma,A,\theta,\lambda)$, a $d$-valent $d$-independent noncyclic 1-skeleton with an embedding, is the 1-skeleton of a simple $d$-polytope. 
\end{proof}

\section{Concluding Remarks}
Theorem \ref{thm:ch2mainresult} classifies projections of reducible (i.e. noncyclic) $d$-valent $d$-independent 1-skeleta using Morse theoretic ideas.  It is natural to ask if these same ideas could be applied toward classifying projections of reducible $d$-valent $k$-independent 1-skeleta for $k\geq 3$.  We observe that for $k\geq 3$ the $2$-faces of a $k$-independent 1-skeleton $(\Gamma,\alpha,\theta,\lambda)\subset\R^n$ are necessarily level.  Indeed fix a $2$-face $Q$, a basepoint vertex $p\in Q$, and edges $e_1,e_2\in E^p_Q$ that span $Q$.  Then for any normal edge to $Q$ at $p$, $e\in N^p_Q$, we have 
\begin{equation}
\label{eq:Conclusion1}
\alpha(e)-\lambda_{e_1}(e)\alpha(\theta_{e_1}(e))\in\R\cdot\alpha(e_1).
\end{equation}
Since $\alpha$ is $3$-independent, we can find a covector $\eta\in \left(\R^n\right)^*$ that vanishes on $\alpha(e_1),\alpha(e_2)$ but not on $\alpha(e)$.  Applying $\eta$ to both sides of \eqref{eq:Conclusion1} and solving for $\lambda$ we find that
\begin{equation}
\label{eq:Conclusion2}
\lambda_{e_1}(e)=\frac{\langle\eta,\alpha(e)\rangle}{\langle\eta,\alpha(\theta_{e_1}(e))\rangle}.
\end{equation}
Thus if $\gamma_Q$ is a loop around $Q$ satisfying $K_{\gamma_Q}^\perp(e)=e$, then Eq. \eqref{eq:Conclusion2} implies that the local normal holonomy number $\left|K_{\gamma_Q}^\perp\right|$ is equal to $1$.

In fact for $k\geq 4$, the $2$-faces of a $k$-independent 1-skeleton $(\Gamma,\alpha,\theta,\lambda)\subset\R^n$ must not only be level, but must also have trivial normal holonomy.  Indeed with $Q$, $p\in Q$, $e_1,e_2\in E^p_Q$, and $e\in N^p_Q$ as above, consider the four vectors $\alpha(e_1)$, $\alpha(e_2)$, $\alpha(e)$, and $\alpha\left(K_{\gamma_Q}^\perp(e)\right)$.  Note that these four vectors must all lie in the same $3$-dimensional subspace $\spa\left\{\alpha(e_1),\alpha(e_2),\alpha(e)\right\}$.  On the other hand, if $K_{\gamma_Q}^\perp(e)$ and $e$ are distinct, then the four vectors must be linearly independent, since $\alpha$ is $4$-independent.  Thus $K_{\gamma_Q}^\perp(e)=e$ and $Q$ has trivial normal holonomy.  Thus besides Corollary \ref{cor:simplepolytope}, Theorem \ref{thm:ch2mainresult} has the following, perhaps surprising, consequence:
\begin{corollary*}
Every $d$-valent $4$-independent noncyclic 1-skeleton has a total lift.
\end{corollary*}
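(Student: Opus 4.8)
The plan is to deduce the corollary directly from Theorem~\ref{thm:ch2mainresult} together with the two elementary observations already made in this section. Let $(\Gamma,\alpha,\theta,\lambda)\subset\R^n$ be a $d$-valent $4$-independent noncyclic $1$-skeleton. First I would note that $4$-independence implies $3$-independence, so $(\Gamma,\alpha,\theta,\lambda)$ is in particular a $3$-independent noncyclic $1$-skeleton; since noncyclic and reducible coincide in the $3$-independent case, $(\Gamma,\alpha,\theta,\lambda)$ is reducible, and so Theorem~\ref{thm:ch2mainresult} applies. It therefore suffices to verify condition $(\dagger)$: that every $2$-face of $(\Gamma,\alpha,\theta,\lambda)$ is level and has trivial normal holonomy.

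Next I would observe that $(\dagger)$ is exactly what the discussion preceding the corollary establishes. Levelness of an arbitrary $2$-face $Q$ follows from $3$-independence: fixing $p\in Q$, spanning edges $e_1,e_2\in E^p_Q$, and a normal edge $e\in N^p_Q$, one chooses a covector $\eta$ vanishing on $\spa\{\alpha(e_1),\alpha(e_2)\}$ but not on $\alpha(e)$ (possible by $3$-independence), and then Eqs.~\eqref{eq:Conclusion1}--\eqref{eq:Conclusion2} force $\lambda_{e_1}(e)$, so that the local normal holonomy number around any loop $\gamma_Q$ with $K^\perp_{\gamma_Q}(e)=e$ telescopes to $1$. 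Trivial normal holonomy of $Q$ follows from $4$-independence: the four vectors $\alpha(e_1),\alpha(e_2),\alpha(e),\alpha(K^\perp_{\gamma_Q}(e))$ all lie in the $3$-dimensional subspace $\spa\{\alpha(e_1),\alpha(e_2),\alpha(e)\}$, hence cannot be linearly independent, which by $4$-independence of the edge vectors at $p$ forces $K^\perp_{\gamma_Q}(e)=e$. The one point I would state carefully is that these arguments apply to \emph{every} $2$-face of the reducible structure, including the ``hidden'' $2$-faces that are not $2$-slices; this is harmless, since they use only axioms A2, A3 and the $k$-independence of the edge vectors at a single vertex of $Q$, never the $2$-slice property.

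Finally, having verified $(\dagger)$, Theorem~\ref{thm:ch2mainresult} yields a total lift of $(\Gamma,\alpha,\theta,\lambda)$, proving the corollary. I do not expect a genuine obstacle here: the content of the corollary is entirely absorbed into Theorem~\ref{thm:ch2mainresult} and the linear-algebra observations above, and the only mild subtlety is the bookkeeping just mentioned --- phrasing the levelness and trivial-holonomy arguments so that they cover all $2$-faces of a reducible $1$-skeleton, not merely its $2$-valent $2$-slices.
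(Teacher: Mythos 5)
Your argument is correct and is essentially identical to the paper's own: the paper likewise verifies $(\dagger)$ by deriving levelness of $2$-faces from $3$-independence via the covector $\eta$ and Eqs.~\eqref{eq:Conclusion1}--\eqref{eq:Conclusion2}, derives trivial normal holonomy from $4$-independence via the four-vector dimension count, and then invokes Theorem~\ref{thm:ch2mainresult} together with the coincidence of noncyclic and reducible in the $3$-independent case. Your extra caution about ``hidden'' $2$-faces is harmless but unnecessary here, since the paper notes that for a $3$-independent noncyclic $1$-skeleton the $2$-faces are exactly the $2$-valent $2$-slices.
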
 
Therefore every 1-skeleton projection of a $4$-independent noncyclic 1-skeleton is in fact a projection of a $d$-independent noncyclic 1-skeleton.
In contrast, there are definitely $3$-independent noncyclic 1-skeleta that do not admit a total lift.  For example the 1-skeleton in Fig.~\ref{fig:NoLift} is $3$-independent noncyclic, but its triangular $2$-faces have non-trivial normal holonomy.

The following problem remains:
\begin{problem*}
Classify the 1-skeleton projections of $3$-independent noncyclic 1-skeleta.
\end{problem*}
 
If $(\Gamma,\alpha,\theta,\lambda)$ is a projection of a $3$-independent noncyclic 1-skeleton, it must be reducible and its $2$-faces must be level.  A first naive conjecture would be that these conditions are also sufficient, although a counterexample seems more likely.  

\begin{problem*}
Find an example of a reducible 1-skeleton with level $2$-faces that is not the projection of a noncyclic $3$-independent 1-skeleton.
\end{problem*}

As mentioned in the Remarks in Section 2:  It is unclear if every toral 1-skeleton is noncyclic.
\begin{problem*}
Find an example of a toral 1-skeleton that does not admit a polarizing covector.
\end{problem*}

Given a polarizing covector on a toral 1-skeleton in $\R^d$, and a compatible Morse function, the induced linear ordering of the vertices defines a shelling order on the $d$-cones of the corresponding fan.  As far as the author knows, showing that every complete simplicial fan has a shelling order is an open problem \cite[pg. 277]{Z}.  

Using the dictionary between 1-skeleta and GKM manifolds (e.g. \cite[Appendix B]{GZ2}), it would be interesting to know if there is an analogue of Theorem \ref{thm:ch2mainresult} in geometry.  For example, what are necessary and sufficient conditions for an effective Hamiltonian $T=\left(S^1\right)^n$-action on a symplectic $2d$-dimensional manifold to extend to a full $\left(S^1\right)^d$-action?

%\appendix

%\begin{acknowledgements}
%\end{acknowledgements}

% BibTeX users please use one of

%\bibliographystyle{spbasic}
%\bibliography{thesis}
%\end{document}
\bibliographystyle{plain}
%\bibliographystyle{spbasic}      % basic style, author-year citations
      % mathematics and physical sciences
%\bibliographystyle{spphys}       % APS-like style for physics
\bibliography{thesis}   % name your BibTeX data base

% Non-BibTeX users please use
%\begin{thebibliography}{}
%
% and use \bibitem to create references. Consult the Instructions
% for authors for reference list style.
%
%\bibitem{RefJ}
% Format for Journal Reference
%Author, Article title, Journal, Volume, page numbers (year)
% Format for books
%\bibitem{RefB}
%Author, Book title, page numbers. Publisher, place (year)
% etc
%\end{thebibliography}

\end{document}